\newcommand{\bb}{\mathfrak{b}}
\newcommand{\id}{\iota} 
\DeclareMathOperator\linsp{span}
\newcommand{\qg}{\mathbb{G}}
\newcommand{\WW}{{\mathds{V}\!\!\text{\reflectbox{$\mathds{V}$}}}}
\newcommand{\Ww}{\mathds{W}}
\newcommand{\wW}{\text{\reflectbox{$\Ww$}}\:\!}
\newcommand{\ww}{\textup{W}}  
\newcommand\fs{B}
\newcommand\eb{E}
\newcommand{\otol}{\overline{\otimes}}
\newcommand\aphopf{\mathcal{AP}}
\newcommand\ap{\textit{AP}}
\theoremstyle{definition}
\newtheorem{dfn}{Definition}[section]
\theoremstyle{plain}
\newtheorem{thm}[dfn]{Theorem}
\newtheorem{lmma}[dfn]{Lemma}
\newtheorem{ppsn}[dfn]{Proposition}
\newtheorem{crlre}[dfn]{Corollary}
\theoremstyle{remark}
\newtheorem{rmrk}[dfn]{Remark}
\newcommand{\bdfn}{\begin{dfn}}
\newcommand{\bthm}{\begin{thm}}
\newcommand\wot{\mathop{\overline\otimes}}
\newcommand{\IC}{\mathbb{C}}
\newcommand{\IH}{\mathbb{H}}
\newcommand{\IN}{\mathbb{N}_0}
\newcommand{\IR}{\mathbb{R}}
\newcommand{\la}{\langle}
\newcommand{\ra}{\rangle}
\newcommand{\cls}{\mathcal{S}}
\newcommand{\Mone}{L^\infty(\qg)}
\newcommand{\Moned}{L^\infty(\widehat{\qg})}
\newcommand{\Mtwo}{L^\infty(\IR)}
\newcommand{\Hone}{L^2(\qg)}
\newcommand{\Htwo}{L^2(\IR)}
\newcommand{\qgdual}{{\widehat{\qg}}}
\newcommand{\ot}{\otimes}
\newcommand\K{\mathcal{K}}
\newcommand\cop{\Delta}
\DeclareMathOperator\Ker{Ker}
\DeclareMathOperator\Rep{Rep}
\newcommand\set[2]{\{\,#1:#2\,\}}
\newcommand\dual{\widehat}
\newcommand\conv{\star}
\newcommand\conj{\overline}
\newcommand\wt{\widetilde}
\newcommand\inv{^{-1}}
\newcommand\lone{L^1}
\newcommand\ltwo{L^2}
\newcommand\linf{L^\infty}
\newcommand{\tp}{\mathbin{\hbox{$\bigcirc$\hbox to
      0pt{\hspace{-0.81em}$\scriptstyle\top$\hfil}}}}
\begin{document}

\title[Admissibility Conjecture and Property (T)]%
  {Admissibility Conjecture and Kazhdan's Property (T) for quantum groups}
\author{Biswarup Das}
\address{Department of Mathematical Sciences, University of Oulu, Finland.}
\address{Instytut Matematyczny, Uniwersytet Wroc\l awski, Poland.}
\email{biswarup.das@math.uni.wroc.pl}
\author{Matthew Daws}
\address{Jeremiah Horrocks Institute, University of Central Lancashire, UK.}
\email{matt.daws@cantab.net}
\author{Pekka Salmi}
\address{Deprtment of Mathematical Sciences, University of Oulu, Finland.}
\email{pekka.salmi@oulu.fi}

\begin{abstract}
We give a partial solution to a long-standing open problem in the
theory of quantum groups, namely we prove that all finite-dimensional
representations of a wide class of locally compact quantum groups
factor through matrix quantum groups (Admissibility Conjecture for
quantum group representations). We use this to study Kazhdan's
Property (T) for quantum groups with non-trivial scaling group,
strengthening and generalising some of the earlier results obtained
by Fima, Kyed and So\l tan, Chen and Ng, Daws, Skalski and
Viselter, and Brannan and Kerr. Our main results are: 
\begin{enumerate}[label=\textup{(\roman*)}]
\item
  All finite-dimensional unitary representations of locally compact
  quantum groups which are either unimodular or arise through a
  special bicrossed product construction are admissible. 
 \item
   A generalisation of a theorem of Wang which characterises Property
   (T) in terms of isolation of finite-dimensional irreducible
   representations in the spectrum. 
 \item
   A very short proof of the fact that quantum groups with Property (T)
 are unimodular. 
 \item
 A generalisation of a quantum version of a theorem of Bekka--Valette
 proven earlier for quantum groups with trivial scaling group,
 which characterises Property  (T) in terms of non-existence of almost
 invariant vectors for weakly  mixing representations. 
 \item
 A generalisation of a quantum version of Kerr--Pichot theorem, proven
 earlier for quantum groups with trivial scaling group,
 which characterises Property (T) in terms of denseness properties of
 weakly mixing representations. 
\end{enumerate}
\end{abstract}

\maketitle

\section{Introduction}
Property (T) was introduced in the mid-1960s by Kazhdan, as a tool to
demonstrate that a large class of lattices are finitely generated. The
discovery of Property (T) was a cornerstone in group theory and the
last decade saw its importance in many different subjects like ergodic
theory, abstract harmonic analysis, operator algebras and some of the
very recent topics like C*-tensor categories (see
\cite{bekka_property_T, Connes_Weiss,
  popa_subfactors, neshveyev_Drinfeld_Center} and references
therein).
In the late 1980s the subject of operator algebraic quantum
groups gained prominence starting with the seminal work of Woronowicz
\cite{woroSUq2}, followed by works of Baaj, Skandalis,
\hbox{Woronowicz}, Van Daele, Kustermans, Vaes and others
\cite{baaj-skand,woro_mutiplicative,woro,
  kusvaes,masuda_woronowicz_nakagami}. Quantum 
groups can be looked upon as noncommutative analogues of locally
compact groups, so quite naturally the notion
of Property (T) appeared also in that more general context.
Property (T) was first studied within the framework of Kac
algebras (a precursor to the theory of locally compact quantum groups)
\cite{petrescu_property_T_for_Kac}, then for algebraic quantum groups
\cite{conti_property_T_algebraic_quantum_groups} and discrete quantum
groups \cite{Fima_property_T_discrete_quantum_group,kyed}, and more recently
for locally compact quantum groups
\cite{xiao_property_T,daws_skalski_viselter,brannan_property_T}.  

By definition a locally compact group $G$ has Property (T) if every
unitary representation with approximately invariant vectors
has in fact a non-zero invariant vector. This definition
extends verbatim to locally compact \emph{quantum} groups,
using the natural extensions of the necessary terms. 
By a result of Fima, a discrete quantum group having Property (T)
is necessarily a Kac algebra, which is equivalent to being unimodular
in the case of discrete quantum groups, and is the dual of a compact
matrix quantum group
\cite[Propositions~7~\&~8]{Fima_property_T_discrete_quantum_group}.
This is a quantum generalisation of a result originally due to Kazhdan
\cite[Theorem 1.3.1 \& Corollary~1.3.6]{bekka_property_T}.
In particular, while studying Property (T) for discrete quantum
groups, one is lead to consider only unimodular discrete quantum
groups. Since a discrete quantum group is
unimodular if and only if it is of Kac type, unimodular discrete
quantum groups have trivial scaling automorphism groups,
and this is important in what follows.

Generalising to locally compact quantum groups,
Brannan and Kerr proved that a second countable
locally compact quantum group with Property~(T) is necessarily unimodular
\cite[Theorem~6.3]{brannan_property_T} --
a result for which we will also give a new and short proof
without the second countability assumption.
So again while studying Property (T) for quantum groups, one is lead
to consider only unimodular quantum groups.
However, a \emph{unimodular} locally compact quantum group can
have a \emph{non-trivial scaling automorphism group}. Examples of
such locally compact quantum groups are Drinfeld doubles
  of non-Kac-type compact quantum groups: see Section
\ref{sec:quantum-double}.
A recent result of Arano (see
\cite[Theorem 7.5]{arano1}), which finds applications in the study of
C*-tensor categories and subfactors \cite{neshveyev_Drinfeld_Center,
  popa_subfactors}, states that the Drinfeld double of the
Woronowicz compact quantum group $SU_q(2n+1)$ has Property (T). This
produces a concrete example of a unimodular locally compact quantum
group with non-trivial scaling automorphism group, which has
Property~(T).

In this paper, we study Property (T) and related problems,
in particular on unimodular locally compact quantum groups
with non-trivial scaling automorphism group.
To enable this study, we prove the
`Admissibility Conjecture' for unimodular locally compact quantum
groups, that is, we show that every finite-dimensional
unitary representation of a unimodular locally compact quantum group
is admissible. The Admissibility Conjecture is a long-standing open
problem in the theory of quantum groups, which was implicitly stated
in \cite{soltan} and was conjectured in
\cite[Conjecture~7.2]{daws}. Admissibility of a finite-dimensional  
unitary representation of a quantum group means effectively that it
`factors' through a compact matrix quantum group.

Returning to locally compact groups,
we note the following important characterisation of
Property (T) by Bekka and Valette
\cite[Theorem~1]{bekka_property_T_amenable_representations}:
a locally compact group $G$ has Property (T) if and only if 
every unitary representation of $G$ with approximately invariant vectors
is not weakly mixing (i.e.\ admits a non-zero finite-dimensional
subrepresentation).
This characterisation turns out to be more useful from the application
perspective than the definition itself, as has been elucidated in
\cite{brannan_property_T}. An important consequence of the
Bekka--Valette theorem is the Kerr--Pichot theorem which states that if
$G$ does not have Property (T), then within the set of all unitary
representations on a fixed separable Hilbert space, the weakly mixing
ones form a dense $G_\delta$-set in the weak topology, strengthening
an earlier result of Glasner and Weiss
\cite[Theorem~2$^\prime$]{glasner_property_T} concerning
the density of ergodic representations. Another important result along
characterising 
Property (T) is that $G$ has Property (T) if and only if the trivial
representation is isolated in the hull--kernel topology
of the dual space $\widehat{G}$ \cite{wang_property_T}. A theorem of
Wang \cite[Theorem~2.1]{wang_property_T} (see also \cite[Theorem
  1.2.5]{bekka_property_T}) extends this to all irreducible
finite-dimensional  unitary representations of $G$ i.e. $G$ has
Property (T) if and only if all irreducible finite-dimensional
unitary representations of $G$ are isolated in $\widehat{G}$. This in
particular helps us better understand the structure of the
full group C*-algebra $C^*_u(G)$ and has other important applications
\cite[Chapter I]{bekka_property_T}.  

The first quantum version of Wang's characterisation of Property (T)
was proven for discrete quantum groups (see \cite[Remark 5.4]{kyed}). 
Under the additional hypothesis of having low duals,
Bekka--Valette and Kerr--Pichot theorems were proven for unimodular
discrete quantum groups
\cite[Theorem 7.3, 7.6 \&  9.3]{daws_skalski_viselter}.
Recall that for discrete quantum groups being unimodular is the
same as being a Kac algebra
and that Kac algebras form a class of locally compact quantum groups
with trivial scaling group. 
The study of Property (T) on quantum groups
progressed along these lines with 
the quantum versions of the theorems of Wang,
Bekka--Valette and Kerr--Pichot
generalised to quantum groups with trivial scaling groups
in \cite[Proposition 3.2 \& Theorem 3.6]{xiao_property_T} and in
\cite[Theorem 4.7, 4.8, 4.9 \& 5.1]{brannan_property_T}.  

Upon giving an affirmative answer to the Admissibility Conjecture for
unimodular locally compact quantum groups (including those with
non-trivial scaling groups), we proceed to prove a quantum version of
Wang's  theorem for them as well as generalised versions of
the Bekka--Valette and the Kerr--Pichot theorems.
In particular, we show that for unimodular quantum groups with
non-trivial scaling automorphism group, the weakly mixing
representations are dense in the set of representations on a separable
Hilbert space if the quantum group does not have Property (T).

\subsection*{Acknowledgement}

We are grateful to Adam Skalski for various mathematical discussions. All the authors would also like to thank Ami Viselter for various mathematical comments on an earlier version of the paper. The first author gratefully acknowledges
the support of the Mathematical Research Unit at the University of
Oulu, Finland, during the years 2015-2017 and partial support of the
Simons Foundation grant 346300 and the Polish Government MNiSW
2015--2019 matching fund.

\section{Notation and terminology} 
\label{Subsection: LCQG}

We collect a few facts from the theory of locally compact quantum
groups, as developed in the papers
\cite{kus,kusvaes,kusvaes_vNa}, and we refer the reader to
\cite{kuster_notes_LCQG} for a summary of the main results in the
theory. We will take the viewpoint 
that whenever we consider a locally compact quantum group, the symbol
$\qg$ denotes the underlying `locally compact quantum space' of the
quantum group. From this viewpoint, for a locally compact quantum
group $\qg$ the corresponding C*-algebra of `continuous functions on
$\qg$ vanishing at infinity' will be denoted by $C_0(\qg)$. It is
equipped with a coassociative \emph{comultiplication}
$\cop:C_0(\qg)\to M(C_0(\qg)\ot C_0(\qg))$ and left and right Haar
weights $\phi$ and $\psi$ \cite[Definition~4.1]{kusvaes}
(where we use the notation that $M(A)$ denotes the multiplier algebra
of a C*-algebra $A$). An important aspect of the theory of locally
compact quantum groups is  a noncommutative Pontryagin duality theory,
which in particular allows one to view both a locally compact group
and its `dual' as locally 
compact quantum groups \cite[Subsection 6.2]{kuster_notes_LCQG},
\cite[Section 8]{kusvaes}. The dual of $\qg$, which is again a
locally compact quantum group, is denoted by $\qgdual$.
(For example if $\qg=G$, a locally compact group, then
$C_0(\qgdual)=C^\ast_r(G)$, the reduced group C*-algebra of $G$.)
As in the case of $\qg$,
$C_0(\qgdual)$ is equipped with a coassociative comultiplication
$\widehat{\cop}:C_0(\qgdual)\to M(C_0(\qgdual)\ot C_0(\qgdual))$
and left and right Haar weights $\widehat{\phi}$ and $\widehat{\psi}$.
By the definition of the dual quantum group as given in \cite[Definition
  8.1]{kusvaes}, we may think of both the C*-algebras $C_0(\qg)$ and
$C_0(\qgdual)$ as acting faithfully and non-degenerately on the
Hilbert space $L^2(\qg)$ (obtained by applying the GNS construction
to the left Haar weight $\phi$).
A locally compact quantum group is said to be \emph{compact} if
$C_0(\qg)$ is unital. Compact quantum groups themselves 
have a very nice theory \cite{woro, vandaele_cqg}.

The fundamental multiplicative unitary
$\ww\in M(C_0(\qg)\ot C_0(\qgdual))$ (called the \emph{Kac--Takesaki operator}
in the \emph{theory of Kac algebras} \cite{enock_Kac}, a precursor to
the theory of locally compact quantum groups) implements the
comultiplications as follows: 
\[
\cop(x)=\ww^\ast(1\ot x)\ww,\qquad x\in C_0(\qg),
\]
and
\[
\widehat{\Delta}(x)=\chi(\ww(x\ot1)\ww^\ast),\qquad x\in C_0(\qgdual),
\]
where $\chi:B(L^2(\qg)\ot L^2(\qg))\to B(L^2(\qg)\ot L^2(\qg))$ is
the flip map
\cite[Definition 6.12 \& Subsection 6.2]{kuster_notes_LCQG},
\cite[pp. 872--873, Definition 8.1]{kusvaes}.

The von Neumann algebra generated by $C_0(\qg)$
(respectively, by $C_0(\qgdual)$) in $B(L^2(\qg))$ will be
denoted by $L^\infty(\qg)$ (respectively, by $L^\infty(\qgdual)$).
Then the preduals of $L^\infty(\qg)$ and $L^\infty(\qgdual)$ are
denoted by $L^1(\qg)$ and $L^1(\qgdual)$, respectively. The above formulas
imply that both the maps $\cop$ and $\widehat{\cop}$ can be
lifted to normal $*$-homomorphisms on $L^\infty(\qg)$ and
$L^\infty(\qgdual)$. The preadjoints of the normal maps $\cop$ and
$\widehat{\cop}$ equip $L^1(\qg)$ and $L^1(\qgdual)$ with the
structure of a completely contractive Banach algebra.
The universal C*-algebra $C_0^u(\qg)$ associated with $\qg$
is the universal C*-envelope of a distinguished Banach $*$-algebra
$L^1_\sharp(\qgdual)$ (as an algebra,
$L^1_\sharp(\qgdual)\subset L^1(\qgdual)$). 
The C*-algebra $C^u_0(\qg)$ is equipped with a coassociative
comultiplication denoted by $\cop_u:C^u_0(\qg)\to M(C^u_0(\qg)\ot C^u_0(\qg))$
\cite[Proposition 6.1]{kus}. Moreover,
there exists a  surjective $*$-homomorphism
$\Lambda_\qg:C^u_0(\qg)\to C_0(\qg)$
called the \emph{reducing morphism}, which intertwines the comultiplications:
$(\Lambda_\qg\ot\Lambda_\qg)\circ\cop_u=\cop\circ\Lambda_\qg$.
The comultiplication on $C_0^u(\qgdual)$ is denoted by
$\widehat{\cop}_u$.
The dual space $C_0^u(\qg)^*$ is a completely contractive Banach
algebra with respect to the \emph{convolution} product
\[
\omega_1\conv \omega_2 = (\omega_1\ot \omega_2)\circ \cop_u,
\qquad \omega_1, \omega_2\in C_0^u(\qg)^*.
\]

As shown in \cite[Corollary 4.3 and Proposition 5.2]{kus}, the
fundamental multiplicative unitary $\ww$ admits a lift
$\wW\in M(C_0(\qg)\ot C^u_0(\qgdual))$ called the semi-universal bicharacter
of $\qg$. It is characterised by the following universal property:  
there is a one-to-one correspondence between
\begin{itemize}
 \item 
   unitary elements $U\in M(C_0(\qg)\ot B)$ such that  
   $(\cop\ot\iota)(U)=U_{13}U_{23}$ (here $B$ is a C*-algebra)
 \item
 non-degenerate $*$-homomorphisms $\pi_U:C^u_0(\qgdual)\to M(B)$
 satisfying $(\iota\ot\pi_U)(\wW)=U$. 
\end{itemize}

There are two important maps associated with a locally compact
quantum group $\qg$ related to the inverse operation of a group.
The \emph{antipode} $S$ is a densely defined norm-closed map on
$C_0(\qg)$ \cite[Section 5]{kusvaes}. It can be extended to a
densely defined strictly closed map on $M(C_0(\qg))$
\cite[Remark 5.44]{kusvaes}. The antipode has a universal counterpart
$S_u$ which is a densely defined map on $C^u_0(\qg)$
\cite[Section 9]{kus}. The \emph{unitary antipode}
$R:C_0(\qg)\to C_0(\qg)$ is a $*$-antiautomorphism
\cite[Proposition~5.20]{kusvaes} satisfying   
$(R\ot R)\circ\cop=\chi\circ\cop\circ R$.
Its universal counterpart $R_u$ is a $*$-antiautomorphism of
$C^u_0(\qg)$ having similar properties as $R$ \cite[Proposition 7.2]{kus}. 
The corresponding maps on the dual quantum group
are denoted by $\widehat{S}$, $\widehat{S}_u$, $\widehat{R}$ and
$\widehat{R}_u$. It is worthwhile to note that if $\qg$ is a Kac
algebra, then $S=R$ and $S_u=R_u$.
In general, the antipode has a polar decomposition
$S = R\circ \tau_{-i/2}$ where $\tau_{i/2}$ is defined by 
an analytic extension of the scaling automorphism group
$(\tau_t)_{t\in\IR}$,
where each $\tau_t: C_0(\qg) \to C_0(\qg)$ is a $*$-automorphism.
The scaling group is implemented by the modular operator $\widehat{\nabla}$
of the dual left Haar weight.
(If $\qg$ is a Kac algebra, $\widehat{\nabla}$ is affiliated to the center of $L^\infty(\qg)$ and consequently $\tau_t = \id$ for every
$t\in\IR$.)
The scaling group of the dual quantum group $\qgdual$ is denoted
by $\widehat{\tau}$. The scaling groups have
their universal counterparts on the C*-algebras
$C^u_0(\qg)$ and $C^u_0(\qgdual)$ and these are denoted by 
$\tau^u$ and $\widehat{\tau}^u$, respectively (see \cite[Definition 4.1]{kus}). 
The universal antipode has a similar decomposition
$S_u = R^u\circ \tau^u_{-i/2}$.

The modular automorphism group associated to the left Haar weight
$\phi$ on $\qg$ is denoted by $(\sigma_t)_{t\in\IR}$,
and its universal counterpart by $(\sigma^u_t)_{t\in\IR}$.

As shown in \cite[Proposition 6.3]{kus} there exist \emph{counits}
$\epsilon_u:C^u_0(\qg)\to\IC$ and
$\widehat{\epsilon}_u:C^u_0(\qgdual)\to\IC$,
which are $*$-homomorphisms satisfying  
\[
(\epsilon_u\ot\iota)(\cop_u(x)) = x
=(\iota\ot\epsilon_u)(\cop_u(x)), \qquad x\in C^u_0(\qg),
\]
and 
\[
(\widehat{\epsilon}_u\ot\iota)(\widehat{\cop}_u(x)) = x
=(\iota\ot\widehat{\epsilon}_u)(\widehat{\cop}_u(x)),
\qquad x\in C^u_0(\qgdual). 
\]
Moreover, $(\iota\ot\widehat{\epsilon}_u)(\wW)=1$.

A \emph{representation} of a locally compact quantum group $\qg$
on a Hilbert space $H$ is an invertible
element $U \in M(C_0(\qg)\ot \K(H))$ such that
\begin{equation} \label{eq:corep}
(\cop\ot \id)(U) = U_{13}U_{23}.
\end{equation}
We are mostly interested in \emph{unitary representations} in which case
$U$ is further a unitary.
Note that if $U\in \linf(\qg)\otol B(H)$ is a unitary
that satisfies \eqref{eq:corep}, then $U \in M(C_0(\qg)\ot \K(H))$.
Indeed, \eqref{eq:corep} implies that
\[
U_{13} = \ww^*_{12}U_{23}\ww_{12}U_{23}^* \in
M\bigl(C_0(\qg)\ot\K(L^2(\qg))\ot\K(H)\bigr)
\]
as $\ww\in M\bigl(C_0(\qg)\ot\K(L^2(\qg))\bigr)$, and the
claim follows.

The \emph{trivial representation}
$1\ot 1\in M(C_0(\qg)\ot \IC)$ is denoted by $1$. 
Two representations $U$ and $V$ are \emph{similar} if
there is an invertible $a\in B(H_V, H_U)$
such that $V = (1\ot a^{-1}) U (1\ot a)$
(where $B(H_V, H_U)$ denotes the set of bounded linear maps from $H_V$
to $H_U$). If $a$ is further an unitary
map $U$ and $V$ are said to be (\emph{unitarily}) \emph{equivalent}.
Given a representation $U\in M(C_0(\qg)\ot \K(H))$,
its \emph{contragradient representation}
is
\[
U^c = (R\ot \top) U \in M(C_0(\qg)\ot \K(\conj{H}))
\]
where $R$ is the unitary antipode,
$\top(x)\conj\xi = \conj{x^*\xi}$ for $x\in B(H)$
and $\xi \in H$ and $\conj{H}$ is the dual Hilbert space of $H$.

If $U\in M(C_0(\qg)\ot \K(H))$ and $V\in M(C_0(\qg)\ot \K(K))$
are representations of $\qg$, their tensor product is 
\[
U\tp V = U_{12}V_{13}\in M(C_0(\qg)\ot\K(H\ot K)).
\]
As noted above, every unitary representation $U$ of $\qg$ is
associated with a representation $\pi$ of the C*-algebra
$C_0^u(\qgdual)$ via $U = (\id\ot \pi)\wW$, and vice versa.
In particular, the trivial representation is associated with
the counit $\widehat\epsilon_u$.
If $\pi_U$ and $\pi_V$ are the representations of $C_0^u(\qgdual)$
associated with $U$ and $V$, respectively, then
the representation $(\pi_U\ot\pi_V)\circ\chi\circ\widehat\cop_u$
is associated with $U\tp V$.

\section{A characterisation of admissible finite-dimensional unitary
   representations} 
\label{Section: Admissibility of a finite-dimensional unitary
  representation of a LCQG}  

Let $U\in M(C_0(\qg)\ot M_n(\IC))$ be a finite-dimensional unitary
representation of a locally compact quantum group $\qg$. Choosing the
standard basis for $\IC^n$ we write $U=(U_{ij})_{i,j=1}^n$, where $U_{ij}\in M(C_0(\qg))$
for $i,j=1$,~$2$, \ldots, $n$ are the matrix coefficients of $U$, and
we have $\cop(U_{ij})=\sum_{k=1}^nU_{ik}\ot U_{kj}$ for
$i,j=1,2,\ldots, n$. 

\begin{dfn} \label{Definition: Admissible representations of LCQG}
A finite-dimensional unitary representation $U=(U_{ij})_{i,j=1}^n$ of a locally
compact quantum group $\qg$ is called \emph{admissible} if
$U^t:=(U_{ji})_{i,j=1}^n$ is invertible in the C*-algebra
$M(C_0(\qg)\ot M_n(\IC))$.  
\end{dfn}

Admissible finite-dimensional representations of locally compact
quantum groups first appeared in the work of So\l tan 
\cite{soltan}, who introduced the quantum Bohr compactification of a
locally compact quantum group.
Daws \cite{daws} studied further the quantum Bohr compactification
as well as questions related to admissibility.
It was conjectured (see \cite[Conjecture 7.2]{daws})
that  every finite-dimensional unitary representation
of a locally compact quantum group is admissible.
Note that this conjecture is already false if we
replace quantum group by quantum semigroup: a
counterexample due to Woronowicz is given in
\cite[Example 4.1]{wang_free_product_CQG}.

From the results in \cite{woromatrixQG} it follows that if
$U\in M(C_0(\qg)\ot M_n(\IC))$ is an admissible finite-dimensional unitary
representation, then the C*-algebra generated by the matrix
coefficients of $U$ in $M(C_0(\qg))$ gives rise to  a compact matrix quantum
group. It follows that a finite-dimensional unitary representation
of a locally compact quantum group is admissible if and only if
it factors, in this sense,
through a compact quantum group (as finite-dimensional
representations of compact quantum groups are admissible).
It is worthwhile to note that the use of $C_0(\qg)$ above is
purely a matter of convenience: we can do similar considerations for
$C^u_0(\qg)$ as well. 

The linear span of all matrix coefficients of admissible unitary
representations of $\qg$ is denoted by $\aphopf(\qg)$.
Note that $\aphopf(\qg)$ is a Hopf $*$-algebra. Its norm closure
in $M(C_0(\qg))$ is denoted by $\ap(\qg)$.
It may be that $\ap(\qg)$ is not the universal C*-completion of
$\aphopf(\qg)$. The compact quantum group
associated with $\aphopf(\qg)$ is the \emph{quantum Bohr compactification}
of $\qg$ and is denoted by $\bb\qg$. See \cite{soltan, daws}
for more details. 

Next we characterise the admissibility of a finite-dimensional
unitary representation in terms of the
scaling group.

\begin{ppsn}\label{Proposition: covariant finite-dimensional C* representations give admissible corepresentations}
Let $\pi:C^u_0(\qgdual)\to M_n(\IC)$ be a non-degenerate
$*$-homomorphism, $U=(\iota\ot\pi)(\wW)$ and
$L_U = \linsp\set{U_{ij}}{i,j=1,2,\ldots, n}$. Then the
following statements are equivalent: 
\begin{enumerate}[label=\textup{(\roman*)}]
\item 
The representation $U\in M(C_0(\qg)\ot M_n(\IC))$ is admissible.
\item
The vector space $L_U$ is invariant under the scaling group $(\tau_t)$. 
\item
  There exists a strongly continuous one-parameter automorphism group
  $(\alpha_t)$ on $M_n(\IC)$ such that 
  \[
  \pi\circ\widehat{\tau}^u_t=\alpha_t\circ\pi\qquad \text{for every }~t\in\IR. 
  \]
\end{enumerate}
\end{ppsn}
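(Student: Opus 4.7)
The proof would rest on the intertwining relation $(\tau_t\otimes\taudu_t)(\wW)=\wW$ between the scaling groups of $\qg$ and $\qgdual$, which after slicing by the matrix-coefficient functionals $\omega_{e_j,e_i}\circ\pi$ yields the key formula
\[
\tau_t(U_{ij})=(\iota\otimes\omega_{e_j,e_i}\circ\pi\circ\taudu_{-t})(\wW),\qquad t\in\IR,\ 1\le i,j\le n.
\]
I would organise the argument as (ii) $\Leftrightarrow$ (iii), (i) $\Rightarrow$ (ii), and (ii) $\Rightarrow$ (i).

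For (ii) $\Leftrightarrow$ (iii), the direction (iii) $\Rightarrow$ (ii) is immediate: $\omega_{e_j,e_i}\circ\alpha_{-t}$ is a linear combination of matrix-coefficient functionals on $M_n(\IC)$, so $\tau_t(U_{ij})\in L_U$. For the converse, the slice map $C^u_0(\qgdual)^{\,*}\ni\mu\mapsto(\iota\otimes\mu)(\wW)\in M(C_0(\qg))$ is injective (a standard consequence of the norm-density in $C^u_0(\qgdual)$ of elements of the form $(\varphi\otimes\iota)(\wW)$), and this injectivity converts the $\tau$-invariance of $L_U$ into $\taudu$-invariance of $\Ker\pi$. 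The induced strongly continuous one-parameter $*$-automorphism group $\beta_t$ on the finite-dimensional C*-algebra $B:=\pi(C^u_0(\qgdual))\subseteq M_n(\IC)$ is then necessarily inner---the connected component of $\operatorname{Aut}(B)$ consists of inner automorphisms---so $\beta_t=\operatorname{Ad}(u_t)$ for a continuous path of unitaries $u_t\in B$, and $\alpha_t:=\operatorname{Ad}(u_t)$ on $M_n(\IC)$ provides the required extension satisfying $\pi\circ\taudu_t=\alpha_t\circ\pi$.

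For (i) $\Rightarrow$ (ii), admissibility lets me appeal to \cite{woromatrixQG} and realise the C*-algebra generated by the matrix coefficients of $U$ inside $M(C_0(\qg))$ as a compact matrix quantum group whose Hopf $*$-algebra structure is inherited from $\qg$; this forces its scaling group to agree with $\tau$ on matrix coefficients, and the scaling group of every compact quantum group preserves matrix coefficients of each finite-dimensional representation. Conversely, assuming (ii), the finite-dimensional $\tau$-invariant subspace $L_U$ permits analytic continuation of $\tau_t|_{L_U}$ to a holomorphic family $\tau_z|_{L_U}$ for all $z\in\IC$, so every $U_{ij}$ lies in $D(\tau_{i/2})\subseteq D(S^{-1})$. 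The Hopf-algebraic identity
\[
\sum_{k=1}^n S^{-1}(U_{kj})\,U_{ik}=\delta_{ij}\cdot 1,
\]
obtained by applying $S^{-1}$ to the antipode axiom $\sum_k S(U_{ik})U_{kj}=\delta_{ij}\cdot 1$, then exhibits the matrix $(S^{-1}(U_{ji}))_{i,j}$ as a two-sided inverse of $U^t$ in $M(C_0(\qg)\otimes M_n(\IC))$, giving admissibility.

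The main technical obstacle is rigorously justifying the antipode identity within the locally compact quantum group framework, where the counit is not globally defined on $C_0(\qg)$. I would handle this by lifting $U$ to its universal counterpart $\widetilde U\in M(C^u_0(\qg)\otimes M_n(\IC))$ through the semi-universal bicharacter, applying the antipode axiom at the universal level where $\epsilon_u$ is available, and pushing the resulting identity down via the reducing morphism $\Lambda_\qg$, which intertwines the antipodes and the scaling groups. Carefully tracking this passage, together with the interplay of $R$, $\tau$ and the $*$-operation on the finite-dimensional space $L_U$, appears to be the sole analytical subtlety.
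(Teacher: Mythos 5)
Your proposal is correct in substance, and its central analytic step --- smearing the finite-dimensional $\tau$-invariant space $L_U$ to conclude $L_U\subseteq D(\tau_{i/2})=D(S^{-1})$ --- is exactly the paper's proof of (ii)$\implies$(i); your (iii)$\implies$(ii), via the relation $(\tau_t\ot\widehat{\tau}^u_t)(\wW)=\wW$, also coincides with the paper's. Where you genuinely diverge is in how the cycle is closed. The paper proves (i)$\implies$(iii) by factoring $\pi$ through the discrete dual of the quantum Bohr compactification, where the scaling group is implemented by a one-parameter unitary group $K^{2it}$ in $\ell^\infty(\widehat{\bb\qg})$, so that $\alpha_t$ is manifestly inner; you instead prove (ii)$\implies$(iii) directly, using injectivity of the slice map $\mu\mapsto(\iota\ot\mu)(\wW)$ on $C_0^u(\qgdual)^*$ to convert $\tau$-invariance of $L_U$ into $\widehat{\tau}^u$-invariance of $\ker\pi$, and then invoking innerness of one-parameter automorphism groups of finite-dimensional C*-algebras. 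That argument is sound (non-degeneracy of $\pi$ gives $1\in\pi(C_0^u(\qgdual))$, so $\mathrm{Ad}(u_t)$ really does extend to $M_n(\IC)$; to get a strongly continuous \emph{group} rather than merely a continuous path, take $u_t=e^{ith}$ from the inner-derivation theorem), and it buys you independence from the Bohr compactification machinery for that implication. Two caveats. First, in your (i)$\implies$(ii) the phrase ``this forces its scaling group to agree with $\tau$ on matrix coefficients'' is doing all the work: the agreement is true but is precisely the non-trivial input that quantum group morphisms intertwine scaling groups (\cite[Remark~12.1]{kus}, which the paper invokes explicitly through the dual morphism $\widehat{\Theta}$), so it must be cited, not declared. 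Second, the final step of your (ii)$\implies$(i) re-derives what the paper simply quotes as \cite[Proposition~3.11]{daws}: your explicit two-sided inverse $(S^{-1}(U_{ji}))_{i,j}$ of $U^t$, justified by lifting to the universal level where $\epsilon_u$ is available, is a legitimate unpacking of that result, though the shorter path is to verify $U_{ij}^*\in D(S)$ and cite the reference as the paper does.
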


\begin{proof}
(i)$\implies$(iii):
Since $U=(U_{ij})$ is admissible, 
$U_{ij}\in \aphopf(\qg)$ for all $i, j=1, 2, \ldots, n$.
Let $\Theta:C^u(\bb\qg)\to \ap(\qg)\subset M(C_0(\qg))$
be the canonical quantum group morphism from the universal
C*-completion of $\aphopf(\qg)$ onto the closure of $\aphopf(\qg)$. 
Let $\widehat{\Theta}:C_0^u(\qgdual)\to M(C^u_0(\widehat{\bb\qg}))$
be the dual morphism so that
$(\id\ot\widehat\Theta)\wW = (\Theta\ot\id)\WW_{\bb\qg}$
(see \cite[Corollary 4.3]{mrw}).
Since $\widehat{\bb\qg}$ is a discrete quantum group, we can drop $u$ from the
notation and write $c_0(\widehat{\bb\qg})$ for notational convenience.

Let $\wt U\in M(C^u(\bb\qg)\ot M_n(\IC))$
be the lift of the representation $U$,
i.e.\ $(\Theta\ot\id)\wt U = U$. 
Then there is a non-degenerate $*$-homomorphism
$\phi:c_0(\widehat{\bb\qg})\to M_n(\IC)$
such that $\wt U = (\id\ot\phi)\WW_{\bb\qg}$.
We have
\[
(\id\ot\phi\circ\widehat\Theta)\wW
= (\Theta\ot\phi)\WW_{\bb\qg} = U = (\id\ot \pi)\wW,
\]
which implies that $\pi=\phi\circ\widehat{\Theta}$.

There is an unbounded strictly positive operator $K$
affiliated to the von Neumann algebra $\ell^\infty(\widehat{\bb\qg})$
such that $K$ implements the scaling group $(\tau'_t)$
of $\widehat{\bb\qg}$ in the sense that 
$\tau'_t(x) = K^{-2it} x K^{2it}$ for
every $x\in\ell^\infty(\widehat{\bb\qg})$ and $t\in\IR$  \cite[Proposition 4.3]{discrete_QG_daele}.
Note that  $K^{2it}\in \ell^\infty(\widehat{\bb\qg})$ for all $t\in\IR$. 

Now for $x\in C_0^u(\qgdual)$ and $t\in\IR$, 
\[
\pi(\widehat{\tau}^u_t(x))
= \phi(\widehat{\Theta}(\widehat{\tau}^u_t(x)))
= \phi(\tau^\prime_t(\widehat{\Theta}(x)))
\]
since $\widehat{\Theta}$ intertwines the scaling groups
as a morphism of quantum groups (by \cite[Remark 12.1]{kus}).
Continuing the calculation, we have
\[
\pi(\widehat{\tau}^u_t(x))
= \phi\bigl(K^{-2it}\widehat{\Theta}(x)K^{2it}\bigr)
= \phi(K^{-2it})\pi(x)\phi(K^{2it}). 
\]
Defining $\alpha_t(A) =  \phi(K^{-2it}) A\phi(K^{2it})$ for $t\in\IR$
and $A\in M_n(\IC)$ yields the desired result.

(iii)$\implies$(ii):
From the version of \cite[Proposition 9.1]{kus} for $C^u_0(\qgdual)$,
we have $(\tau_t\ot\iota)(\wW) = (\iota\ot\widehat{\tau}^u_{t})\wW$
for all $t\in\IR$. Then 
\begin{equation*}
\begin{split}
(\tau_t\ot\id)(U) &= (\tau_t\ot\id)\circ (\id \ot\pi)(\wW)
=(\id\ot \pi\circ\widehat{\tau}^u_t)(\wW)\\
&=(\id\ot \alpha_t\circ\pi)(\wW) = (\id\ot \alpha_t)(U).
\end{split}
\end{equation*}
It follows that $L_U$ is invariant under the scaling group $(\tau_t)$  

(ii)$\implies$(i):
Let $\tau_z$ denote the
analytic extension of $(\tau_t)_{t\in\IR}$ at $z\in\IC$ in the
$\sigma$-weak topology on $L^\infty(\qg)$.  For $x\in L_U$ define in a standard
way the smear of $x$
\[ x_n = \frac{n}{\sqrt\pi} \int_{\mathbb R}
e^{-n^2t^2} \tau_t(x) \ dt, \]
where the integral converges in the $\sigma$-weak topology.
Each $x_n$ is analytic for $(\tau_t)$.
As $L_U$ is invariant under the scaling group and is finite-dimensional,
it follows that $x_n \in L_U$ for each $n$.  It follows that $L_U \subseteq
D(\tau_z)$ for all $z$.  In particular, $L_U\subset D(\tau_{{\frac{i}{2}}})
= D(S\inv)$, due to the polar
decomposition of $S\inv$. It then follows that
$U_{ij}^\ast\in D(S)$ for all $i,j=1,2,\ldots, n$. By
\cite[Proposition 3.11]{daws}, $U$ is admissible.
\end{proof}

\begin{rmrk}\label{Remark: Admissiblity conjecture is true for Kac algebras}
It follows from Proposition \ref{Proposition: covariant
  finite-dimensional C* representations give admissible
  corepresentations} that for quantum groups with trivial scaling
automorphism group, all finite-dimensional unitary representations are
admissible. In particular the Admissibility Conjecture
holds for Kac algebras, as also noted in \cite{daws}.
\end{rmrk}

\begin{rmrk}\label{Remark: Admissiblity conjecture implies
    representation is covariant} 
For a general locally compact quantum group, it follows that
admissible representations of $\qg$
correspond to those finite-dimensional representations of
$C^u_0(\qgdual)$ which are covariant with respect to the scaling
action of $\IR$ on $C^u_0(\qgdual)$. 
\end{rmrk}

\begin{rmrk}\label{Remark: CQG}
Let $U\in M(C_0(\qg)\ot M_n(\IC))$ be admissible.  Then $U$ is a
corepresentation of the compact quantum group $AP(\qg)$. As the
inclusion $AP(\qg) \rightarrow M(C_0(\qg))$ intertwines $R$, we see
that $U^c$ may also be considered as a corepresentation of the compact
quantum group $AP(\qg)$. Combining \cite[Definition 1.3.8, Definition
  1.4.5 and equation (1.7.1)]{nesh-tuset} we can conclude that
$\overline{U} = (U_{ij}^*)_{i,j=1}^n$ is equivalent to $U^c$, as
corepresentations of $AP(\qg)$, and hence also of $C_0(\qg)$ (it is
worthwhile to point out that in \cite{nesh-tuset}, $U^c$ and
$\overline{U}$ are what we call $\overline{U}$ and $U^c$ respectively
in this article).  

%
\end{rmrk}

\section{Examples of locally compact quantum groups with admissible
  representations}

It is an open question whether all
finite-dimensional unitary representations of a locally compact
quantum group are admissible. In this subsection we give
examples of locally compact quantum groups for which the
Admissibility Conjecture is true.

\subsection{Quantum groups arising from a bicrossed product
  construction}
\label{Subsection: Examples of locally compact
  quantum groups with admissible representations} 
We recall some facts about the bicrossed product construction of a matched
pair of quantum groups and refer to \cite{bicrossed} for
details. Define a normal $*$-homomorphism $\tau$ as follows: 
\[
\tau : L^\infty(\qg)\otol L^\infty(\IR)\to L^\infty(\qg)\otol L^\infty(\IR),\quad
\tau(f)(t)=\tau_t(f(t)) \quad (f\in L^\infty(\qg)\otol L^\infty(\IR),~t\in\IR) 
\]
where we have identified $L^\infty(\qg)\otol L^\infty(\IR)$ with
$L^\infty(\IR,L^\infty(\qg))$.
Then the map $\tau$ is a matching between the locally compact quantum groups
$\qg$  and $\IR$ with trivial cocycles
($\mathscr{U}=1$ and $\mathscr{V} = 1$),
in the sense of \cite[Definition 2.1]{bicrossed}. 

  In the notation of \cite[Definition 2.1]{bicrossed}, we define
  a left action
  $\alpha:L^\infty(\IR)\to L^\infty(\qg)\otol L^\infty(\IR)$
  and a right action
  $\beta:L^\infty(\qg)\to L^\infty(\qg)\otol L^\infty(\IR)$ by the
  formulas  
\[
\alpha(f)=\tau(1\ot f)=1 \ot f,\qquad f\in L^\infty(\IR), 
\]
and
\[
\beta(x)(t) = \tau(x\ot 1)(t) = \tau_t(x),\qquad t\in\IR,~x\in L^\infty(\qg). 
\]
Consider the crossed products
$M:= \qg\tensor[_\alpha]{\ltimes}{} L^\infty(\IR)$ and
$\widehat{M}:=L^\infty(\qg)\tensor[]{\rtimes}{_\beta} \IR$. 
It follows from the discussion in \cite[Subsection 2.2]{bicrossed} 
that $M$ is a locally compact  quantum group in the reduced form
and $\widehat{M}$ is the reduced dual. Note that since the left
action $\alpha$ is trivial, $M = L^\infty(\qgdual)\wot L^\infty(\IR)$.

Denote the quantum group underlying $\widehat{M}$ by
$\qg\rtimes_{\beta}\IR$.
The left multiplicative unitary of $\qg\rtimes_{\beta}\IR$
is 
\[
\ww = \ww^\IR_{24} \bigl((\id\ot\beta)(\dual\ww^\qg)\bigr)_{134}, 
\]
where the leg numbering refers to the underlying Hilbert
space $\ltwo(\qg)\ot\ltwo(\IR)\ot\ltwo(\qg)\ot\ltwo(\IR)$
(see \cite[p. 141]{double_crossed_product}).
It follows from the above formula that 
$C_0(\qg\rtimes_{\beta}\IR) = C_0(\qg)\rtimes_{\beta}\IR$.
We will show a similar characterisation of 
$C^u_0(\qg\rtimes_{\beta}\IR)$. Consider the action
\[
\beta^u:C^u_0(\qg)\to M(C^u_0(\qg)\ot C_0(\IR)),\qquad
 \beta^u(x)(t)=\tau^u_t(x), \quad x\in C^u_0(\qg), t\in\IR.
\]

\begin{lmma}
$C_0^u(\qg\rtimes_\beta \IR) \cong C_0^u(\qg)\rtimes_{\beta^u}\IR$. 
\end{lmma}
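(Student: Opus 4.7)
The plan is to establish the isomorphism by comparing the universal properties of both sides. By the universal property of the full C*-algebraic crossed product, non-degenerate $*$-homomorphisms $C_0^u(\qg)\rtimes_{\beta^u}\IR\to M(B)$ are in natural bijection with covariant pairs $(\pi,V)$, where $\pi\colon C_0^u(\qg)\to M(B)$ is a non-degenerate $*$-homomorphism and $V$ is a strictly continuous unitary representation of $\IR$ in $M(B)$ satisfying $V_t\pi(x)V_t^{-1}=\pi(\tau_t^u(x))$ for all $t\in\IR$ and $x\in C_0^u(\qg)$. By the universal property of $C_0^u(\qg\rtimes_\beta\IR)$ recalled in Section~\ref{Subsection: LCQG}, non-degenerate $*$-homomorphisms $C_0^u(\qg\rtimes_\beta\IR)\to M(B)$ are in natural bijection with unitary corepresentations of $\dual{\qg\rtimes_\beta\IR}$ valued in $M(B)$. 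It thus suffices to set up a natural bijection between these two families of data.

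Given a covariant pair $(\pi,V)$, I would first apply the universal property of the semi-universal bicharacter of $\dual\qg$ (an element of $M(C_0(\dual\qg)\ot C_0^u(\qg))$) to $\pi$, obtaining a unitary corepresentation $U_\pi\in M(C_0(\dual\qg)\ot B)$ of $\dual\qg$. Applying to $\dual\qg$ the analogue of the identity $(\tau_t\ot\id)(\wW)=(\id\ot\dual\tau_t^u)(\wW)$ used in the proof of Proposition~\ref{Proposition: covariant finite-dimensional C* representations give admissible corepresentations}, the covariance $V_t\pi(x)V_t^{-1}=\pi(\tau_t^u(x))$ translates into the quantum-group-level compatibility
\[
(\dual\tau_t\ot\id)(U_\pi)=(1\ot V_t)\,U_\pi\,(1\ot V_t^{-1})\qquad(t\in\IR).
\]
Dualising the factorisation
\[
\ww^{\qg\rtimes_\beta\IR}=\ww^\IR_{24}\bigl((\id\ot\beta)(\dual\ww^\qg)\bigr)_{134}
\]
from the excerpt gives a corresponding factorisation of the multiplicative unitary of $\dual{\qg\rtimes_\beta\IR}$, which then allows one to assemble $U_\pi$ together with $V$ (viewed as a unitary corepresentation of $\IR$ via self-duality) into a unitary corepresentation of $\dual{\qg\rtimes_\beta\IR}$ valued in $M(B)$; the compatibility displayed above is exactly what is required for the corepresentation identity to hold. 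Conversely, any unitary corepresentation of $\dual{\qg\rtimes_\beta\IR}$ decomposes through the bicrossed product structure into a covariant pair $(\pi,V)$.

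The main obstacle I expect is the careful leg-numbering bookkeeping needed to verify that the assembly on the dual side yields a genuine corepresentation of $\dual{\qg\rtimes_\beta\IR}$, and in particular that the action of $\IR$ on $C_0^u(\qg)$ inherited from $\dual{\qg\rtimes_\beta\IR}$ is $\beta^u=\tau^u$ rather than $\beta=\tau$. The latter point ultimately follows from the fact that the reducing morphism $\Lambda_\qg$ intertwines $\tau^u$ with $\tau$, combined with the universality of the semi-universal bicharacter of $\dual\qg$. Once these verifications are complete and the bijection is seen to be natural in $B$, both $C_0^u(\qg\rtimes_\beta\IR)$ and $C_0^u(\qg)\rtimes_{\beta^u}\IR$ represent the same functor on C*-algebras, and are therefore canonically $*$-isomorphic.
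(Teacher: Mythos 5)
Your proposal is correct and follows essentially the same route as the paper: the paper likewise identifies non-degenerate representations of $C_0^u(\qg\rtimes_\beta\IR)$ with unitary representations of $\qg\ltimes_\alpha\IR$, decomposes the latter via the Baaj--Vaes factorisation into a pair $(z,y)$, uses the universal lift of the comultiplication formula (justified exactly as you anticipate, by checking that both sides are corepresentations and applying the reducing morphism) to show the associated $\pi$ is covariant for $\tau^u$, and concludes by exhibiting $\ww^\IR_{24}\bigl((\iota\ot\beta^u)(\widehat{\wW^\qg})\bigr)_{134}$ as a maximal corepresentation with right leg generating $C_0^u(\qg)\rtimes_{\beta^u}\IR$. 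Your Yoneda-style packaging of the final step is just a more abstract phrasing of the paper's maximality argument.
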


\begin{proof}
  Denote the comultiplication of $\qg\ltimes_\alpha\IR$ by $\Delta$.
  We recall the following formula from \cite[p. 141]{double_crossed_product}: 
  \[
  (\iota\ot\Delta)({\ww^\qg}\ot 1_{\Mtwo}) =
  \ww^\qg_{14}\bigl((\iota\ot\alpha)\circ\beta\ot\iota\bigr)({\ww^\qg})_{1452},  
  \]
  where the leg numbering is done with respect to the Hilbert space
  $\Hone\ot\Hone\ot\Htwo\ot\Hone\ot\Htwo$.
  Applying the definition of $\alpha$, we obtain 
    \begin{equation}\label{eq:first-rep}
      (\iota\ot\Delta)({\ww^\qg}\ot 1_{\Mtwo}) = \ww^\qg_{14}
      \bigl((\iota\ot\chi)\circ(\beta\ot\iota)({\ww^\qg})\bigr)_{125}.
    \end{equation}
    where $\chi:\Mtwo\otol\Moned\to\Moned\otol\Mtwo$ is the flip map.
    Write $U = (\iota\ot\chi)\circ(\beta^u\ot\iota)({\Ww^\qg})$.
    Then \eqref{eq:first-rep} says that
    \[
    (\Lambda_\qg\ot \id\ot\id\ot\id\ot\id)\bigl((\iota\ot\Delta)({\Ww^\qg}\ot
    1_{\Mtwo})\bigr) = (\Lambda_\qg\ot \id\ot\id\ot\id\ot\id)(\Ww^\qg_{14} U_{125}).
    \]
    where $\Lambda_\qg:C^u_0(\qg)\to C_0(\qg)$ is the reducing
    morphism.   We would like to apply \cite[Result 6.1]{kus} (or a
    version of it for $\qg$) to deduce that in fact
    \begin{equation}
      \label{eq:univ-rep}
      (\iota\ot\Delta)(\Ww^\qg\ot1_{\Mtwo})=\Ww^\qg_{14}U_{125}.
    \end{equation}
    To this end, we need to check that both sides of the preceding
    equation define corepresentations of $C_0^u(\qg)$. 

   Since $(\tau_t^u\ot\tau_t^u)\circ\Delta_u=\Delta_u\circ\tau_t^u$ for all
   $t\in\IR$, it follows that $U\in M(C_0^u(\qg)\ot \K(\Hone\ot\Htwo)$ is a
   corepresentation of $C_0^u(\qg)$.
   Therefore, $\Ww^\qg\tp U$ is a corepresentation and hence
   also 
   \[
   \bigl((\id\ot\chi\ot\id)(\Ww^\qg\tp U)\bigr)_{1245}
   \]
   where $\chi$ is the appropriate flip map.
   It follows that the right-hand side of
   \eqref{eq:univ-rep} is a 
   corepresentation, and the left-hand side clearly is as well.

Suppose that $X\in M(C_0(\qg \ltimes_\alpha\IR)\ot \K(H))$ is a
unitary representation of $\qg\ltimes_\alpha\IR$.
By \cite[Proposition 4.1]{double_crossed_product} there
are unitary representations $z\in M(C_0(\qgdual)\ot \K(H))$ and
$y\in M(C_0(\IR)\ot \K(H))$
such that $X=(\alpha\ot\iota)(y)z_{13}$, where the leg numbering
is done with respect to the Hilbert space $\Hone\ot\Htwo\ot H$.
(The unitarity of the representations is implicit in
the proof of \cite[Proposition 4.1]{double_crossed_product}.)
The equation after equation (4.2) in page 146 of
\cite{double_crossed_product} says that
\[
(\Delta\ot\iota)(z_{13})z^\ast_{35}
=(\alpha\ot\iota)(y^\ast)_{345}z_{15}(\alpha\ot\iota)(y)_{345}, 
\] 
where the leg numbering is done with respect to the Hilbert space
\[
\Hone\ot\Htwo\ot\Hone\ot\Htwo\ot H.
\]
Applying the fact that $\alpha(f)=1_{\Mone}\ot f$ for all $f\in\Mtwo$,
it follows that the above equation can be reduced to: 
\begin{equation}\label{Equation: equation 3}
(\Delta\ot\iota)(z_{13})z^\ast_{35}=y^\ast_{45}z_{15}y_{45}.
\end{equation}
Let $\pi: C^u_0(\qg)\to B(H)$ be the non-degenerate
C*-representation associated with $z$, so that we have
$(\iota\ot\pi)(\widehat{\wW^\qg})=z$. Then it is easy to check that  
\begin{equation*}
\begin{split}
  (\Delta\ot\iota)(z_{13})=
  (\iota\ot\iota\ot\iota\ot\iota\ot\pi)
  \bigl(\sigma\bigl((\id\ot \Delta)({\Ww^\qg}^\ast\ot1_{\Mtwo})\bigl)\bigr), 
\end{split}
\end{equation*}
where 
\[
\sigma:
M(C^u_0(\qg)\ot C_0(\qgdual)\ot C_0(\IR)\ot C_0(\qgdual)\ot C_0(\IR))
\to M(C_0(\qgdual)\ot C_0(\IR)\ot C_0(\qgdual)\ot C_0(\IR)\ot C^u_0(\qg))
\]
permutes the coordinates according to the permutation $(1\,5\,4\,3\,2)$.

Then, applying  \eqref{eq:univ-rep} and \eqref{Equation: equation 3},
we get
\[
(\iota\ot\iota\ot\iota\ot\iota\ot\pi)
\Bigl(\bigl((\iota\ot\chi)\circ(\iota\ot\beta^u)
  \bigl(\widehat{\wW^\qg}\bigr)\bigr)_{145}\Bigr)
  = y^\ast_{45}z_{15}y_{45},
\]
where 
$\chi : M(C^u_0(\qg)\ot C_0(\IR))\to M(C_0(\IR)\ot C^u_0(\qg))$
is the usual flip (we are also using the fact that
$\widehat{\wW^\qg} = \Sigma(\Ww^\qg)^\ast\Sigma$). 

Letting $y_t:=y(t)\in B(H)$ for $t\in\IR$,  it
follows from the above equation that
\[
\pi(\tau^u_t(x)) = y^\ast_t\pi(x)y_t,\qquad t\in\IR, x\in C^u_0(\qg).
\]
Therefore, $\pi:C^u_0(\qg)\to B(H)$ is a covariant C*-representation and
hence lifts to
a representation $\wt\pi$ of the crossed product C*-algebra
$C^u_0(\qg)\rtimes_{\beta^u}\IR$.  
Put
\[
\wW:=\ww^\IR_{24} \bigl((\iota\ot\beta^u)(\widehat{\wW^\qg})\bigr)_{134}.
\]
The above argument shows that
$X = (\id\ot\wt\pi)(\wW)$, so that $\wW$
is a maximal corepresentation of $(C_0(\qg\ltimes_\alpha\IR),\Delta)$
in the sense of \cite[Definition 23]{soltan_multiplicative_ii}.
Therefore, the right leg of $\wW$ generates the universal C*-algebra
$C^u_0(\qg\rtimes_{\beta}\IR)$, but this
is precisely $C^u_0(\qg)\rtimes_{\beta^u}\IR$, which is what we wanted
to prove. 
\end{proof}

In our case, $\alpha$ being trivial, the left Haar weight
of $\qg\ltimes_\alpha\IR$ is $\widehat\phi\ot\psi$
where $\widehat\phi$ is the left Haar weight of $\dual\qg$
and $\psi$ the Haar weight of $\IR$
(see \cite[p. 141]{double_crossed_product}). 
Hence, the modular operator of the left Haar  weight of
$\qg\ltimes_\alpha\IR$ is $\widehat{\nabla}\ot I_{L^2(\IR)}$,
where $\widehat{\nabla}$ is the modular operator of the
left Haar weight of $\widehat\phi$.
It follows that the scaling group of $\qg\rtimes_{\beta}\IR$ is given by
$\widetilde\tau_t = \tau_t\ot\iota_{B(L^2(\IR))}$, $t\in\IR$, and so
the scaling group on $C^u_0(\qg\rtimes_{\beta}\IR)$ 
is given by $\widetilde\tau^{u}_t = \tau^u_t\ot\iota_{B(L^2(\IR))}$,
$t\in\IR$.

By the general theory of crossed products, the covariant
representations of $C^u_0(\qg)$ with respect to $(\tau^u_t)$
correspond to the non-degenerate representations of 
$C_0^u(\qg)\rtimes_{\beta^u}\IR$ (note that $\IR$ is amenable
so the reduced and the full crossed products coincide).
Now suppose that $\rho$ is a non-degenerate representation of
$C_0^u(\qg)\rtimes_{\beta^u} \IR$.
Then $\pi:=\rho\circ\beta^u$ is a covariant representation
of $C_0^u(\qg)$ and the associated one-parameter unitary group
is given by $U_t = \rho(1\ot\lambda_t)$
where $\lambda_t\in M(C^*(\IR))$ is the left translation by
$t\in\IR$. 

It is easy to see that
$\wt\tau^{u}_t \circ \beta^u = \beta^u\circ \tau^u_t$ for 
all $t\in\IR$.
Now for every $f\in C_c(\IR)$ (the space of
compactly supported continuous functions on $\IR$)
and $x\in C^u_0(\qg)$, we have 
\begin{equation*}
\begin{split}
  \rho\big(\wt\tau^{u}_t\big(\beta^u(x)(1\ot f)\big)\big)
  &=\rho\big(\beta^u(\tau^u_t(x))(1\ot f)\big)
  =\pi(\tau^u_t(x))\int_\IR f(s)U_s \,ds\\
  &=U^\ast_t\Bigl(\pi(x)\int_\IR f(s)U_s \,ds\Bigr)U_t
  =U^\ast_t\bigl(\rho\big(\beta^u(x)(1\ot f)\big)\bigr)U_t.
\end{split}
\end{equation*}
The norm-density of elements of the form $\beta^u(x)(1\ot f)$
in $C^u_0(\qg)\rtimes_{\beta}\IR$ implies that 
$\rho(\wt\tau^{u}_t(X)) = U^\ast_t(\rho(X))U_t$ for all
$X\in C^u_0(\qg)\rtimes_{\beta}\IR$ and $t\in\IR$.
In particular, it follows from
Proposition \ref{Proposition: covariant finite-dimensional C*
  representations give admissible corepresentations},
that all finite-dimensional unitary representation of the
quantum group $\qg\ltimes_\alpha \IR = \widehat{\qg\rtimes_{\beta}\IR}$
are admissible. We summarise these observations in the following
theorem.

\begin{thm} \label{thm:bicrossed}
Let $\qg$ be a locally compact quantum group. Let
$(\tau_t)_{t\in\IR}$ be the scaling group and consider the matching 
\[
\tau: L^\infty(\qg)\otol L^\infty(\IR)\to L^\infty(\qg)\otol
L^\infty(\IR),
\quad \tau(f)(t) = \tau_t(f(t)) \quad(f\in L^\infty(\qg)\otol L^\infty(\IR),~t\in\IR).
\] 
Let $\alpha:L^\infty(\IR)\to L^\infty(\qg)\otol L^\infty(\IR)$
and  $\beta:L^\infty(\qg)\to
L^\infty(\qg)\otol L^\infty(\IR)$
be the associated left and right actions 
defined by the formulas
\begin{align*}
\alpha(f) &= \tau(1\ot f)=1_{L^\infty(\qg)}\ot f,\qquad f\in L^\infty(\IR),\\
\beta(x)(t) &= \tau(x\ot 1)(t) =\tau_t(x),\qquad t\in\IR,~x\in L^\infty(\qg). 
\end{align*}
Let $\qg\ltimes_{\alpha}\IR$ be the the quantum group arising from the
action $\alpha$ through the bicrossed product construction.
Then all finite-dimensional unitary representations of $\qg\ltimes_\alpha\IR$
are admissible.   
\end{thm}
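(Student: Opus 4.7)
The plan is to reduce admissibility, via Proposition \ref{Proposition: covariant finite-dimensional C* representations give admissible corepresentations}, to a purely crossed-product fact: every non-degenerate representation of a full crossed product by $\IR$ is automatically covariant for the dual $\IR$-action in a strongly continuous way. So a finite-dimensional unitary representation of $\qg\ltimes_\alpha\IR$ will be admissible for essentially structural reasons, with no further analysis of $\qg$ required.

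First I would identify the dual and its universal C*-algebra. From the discussion preceding the theorem, the underlying dual quantum group of $\qg\ltimes_\alpha\IR$ is $\qg\rtimes_\beta\IR$, and by the lemma just proved we have the isomorphism $C^u_0(\qg\rtimes_\beta\IR)\cong C^u_0(\qg)\rtimes_{\beta^u}\IR$. A finite-dimensional unitary representation $U$ of $\qg\ltimes_\alpha\IR$ corresponds, via $U=(\id\ot\pi)\wW$, to a non-degenerate $*$-homomorphism $\pi:C^u_0(\qg\rtimes_\beta\IR)\to M_n(\IC)$, which by the isomorphism above is the same datum as a non-degenerate representation $\rho$ of $C^u_0(\qg)\rtimes_{\beta^u}\IR$ on $\IC^n$. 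By Proposition \ref{Proposition: covariant finite-dimensional C* representations give admissible corepresentations}, it suffices to produce a strongly continuous one-parameter group $(\alpha_t)$ on $M_n(\IC)$ with $\pi\circ\widetilde\tau^u_t=\alpha_t\circ\pi$, where $\widetilde\tau^u$ is the scaling group on $C^u_0(\qg\rtimes_\beta\IR)$.

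Second, I would pin down $\widetilde\tau^u$. Because $\alpha$ is trivial, the left Haar weight of $\qg\ltimes_\alpha\IR$ is $\widehat\phi\ot\psi$, so the modular operator of this weight factors as $\widehat\nabla\ot I_{L^2(\IR)}$, and the scaling group is $\widetilde\tau^u_t=\tau^u_t\ot\id$. The key compatibilities are then $\widetilde\tau^u_t\circ\beta^u=\beta^u\circ\tau^u_t$ and the triviality of $\widetilde\tau^u_t$ on the second factor of $C^u_0(\qg)\rtimes_{\beta^u}\IR$. By the general theory of crossed products, $\rho$ is induced by a covariant pair $(\pi_0,U)$ where $\pi_0=\rho\circ\beta^u$ is a representation of $C^u_0(\qg)$ and $U_t=\rho(1\ot\lambda_t)$ is a strongly continuous unitary group on $\IC^n$ with $\pi_0(\tau^u_t(x))=U_t^*\pi_0(x)U_t$. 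A short computation on elements of the dense set of the form $\beta^u(x)(1\ot f)$ with $f\in C_c(\IR)$ then gives $\rho(\widetilde\tau^u_t(X))=U_t^*\rho(X)U_t$ for all $X$.

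Setting $\alpha_t(A)=U_t^*AU_t$ yields a strongly continuous one-parameter automorphism group of $M_n(\IC)$ with $\pi\circ\widetilde\tau^u_t=\alpha_t\circ\pi$, and Proposition \ref{Proposition: covariant finite-dimensional C* representations give admissible corepresentations} gives admissibility of $U$. The one step I expect to absorb most of the care is the identification of the scaling group of $\qg\ltimes_\alpha\IR$ and the verification that it acts as $\tau^u\ot\id$ on the universal level; everything downstream is then routine crossed-product formalism, and the triviality of the $\IR$-factor in $\widetilde\tau^u$ is precisely what makes the one-parameter group $U_t$ from the crossed-product covariance serve simultaneously as the implementer of scaling-group covariance.
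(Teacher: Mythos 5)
Your proposal is correct and follows essentially the same route as the paper: identify $C^u_0(\qg\rtimes_\beta\IR)$ with $C^u_0(\qg)\rtimes_{\beta^u}\IR$ via the preceding lemma, compute the scaling group as $\widetilde\tau^u_t=\tau^u_t\ot\id$ from the Haar weight $\widehat\phi\ot\psi$, extract the implementing unitary group $U_t=\rho(1\ot\lambda_t)$ from the crossed-product covariant pair, verify $\rho(\widetilde\tau^u_t(X))=U_t^*\rho(X)U_t$ on the dense set $\beta^u(x)(1\ot f)$, and conclude by Proposition \ref{Proposition: covariant finite-dimensional C* representations give admissible corepresentations}. This is precisely the argument the paper gives in the discussion that the theorem summarises.
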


\subsection{Unimodular quantum groups}
\label{sec:quantum-double}

A locally compact quantum group is \emph{unimodular}
if its left and right Haar weights coincide. 
The following theorem states that all unimodular quantum groups
satisfy the Admissibility Conjecture.

\begin{thm}\label{thm:admissibility-unimodular}
  Suppose that $\qg$ is a unimodular locally compact quantum group.
  Then all finite-dimensional unitary representations of $\qg$ are admissible.  
\end{thm}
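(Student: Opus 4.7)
The plan is to apply Proposition~\ref{Proposition: covariant finite-dimensional C* representations give admissible corepresentations} and verify the covariance condition (iii): for every non-degenerate $\ast$-homomorphism $\pi\colon C^u_0(\qgdual)\to M_n(\IC)$, produce a strongly continuous one-parameter group $(\alpha_t)$ of $\ast$-automorphisms of $M_n(\IC)$ such that $\pi\circ\widehat{\tau}^u_t=\alpha_t\circ\pi$. Equivalently, writing $U=(\iota\ot\pi)\wW$ and using Kustermans' relation $(\tau_t\ot\iota)(\wW)=(\iota\ot\widehat\tau^u_t)(\wW)$, the goal is to show that $U$ and $(\tau_t\ot\iota)(U)$ are unitarily equivalent as corepresentations on $\IC^n$, via intertwiners $v_t$ that depend strongly continuously and multiplicatively on $t$; then $\alpha_t=\mathrm{Ad}(v_t)$.

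The unimodularity hypothesis enters as follows. Since $\phi=\psi$, the modular element $\delta$ of $\qg$ is trivial, and via $\sigma_t(\delta)=\nu^t\delta$ the scaling constant satisfies $\nu=1$; hence $\phi\circ\tau_t=\phi$ for every $t\in\IR$, so the Haar weight is fixed by the scaling group. On the Hilbert space $L^2(\qg)$ the scaling group is then implemented by $\widehat{\nabla}^{it}$ without any extra scaling factor, and Haar-integration of products involving $\tau_t$-translated elements is well-behaved. The intention is to use this invariance to produce the intertwining matrices: at the level of ``smeared'' matrix coefficients $U_{ij,n}=\frac{n}{\sqrt\pi}\int e^{-n^2 t^2}\tau_t(U_{ij})\,dt$ (as in the proof of Proposition \ref{Proposition: covariant finite-dimensional C* representations give admissible corepresentations}) one obtains $\tau$-analytic elements, and once $L_U$ is $\tau$-invariant the argument of that proof closes the loop by showing $U_{ij}^*\in D(S)$ and invoking \cite[Proposition~3.11]{daws}.

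The main obstacle is exactly to establish the $\tau$-invariance of $L_U$, equivalently to construct the intertwiners between $U$ and $(\tau_t\ot\iota)(U)$, in the regime where the matrix coefficients need not lie in any $L^p(\qg)$-space. My approach would be to work at the level of the semi-universal bicharacter $\wW\in M(C_0(\qg)\ot C^u_0(\qgdual))$, using that $\widehat{\tau}^u$ is implemented on $C^u_0(\qgdual)$ via a one-parameter group whose positive generator is closely related to $\nabla$; the condition $\nu=1$ provides the norm control that keeps the formally Plancherel-style intertwiner a bounded element of $M_n(\IC)$. Alternatively, one could try to reduce to the bicrossed-product situation of Theorem~\ref{thm:bicrossed} by embedding $\qg$ into $\qg\ltimes_\alpha\IR$ (with the trivial left action) and pulling admissibility back, although this detour loses unimodularity and would have to be handled carefully.

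Once such $(v_t)$ is produced and shown to be strongly continuous and multiplicative in $t$ (uniqueness up to scalars being automatic from irreducible decomposition of $\pi$), setting $\alpha_t=\mathrm{Ad}(v_t)$ provides the required one-parameter automorphism group on $M_n(\IC)$, and Proposition~\ref{Proposition: covariant finite-dimensional C* representations give admissible corepresentations} immediately yields admissibility of $U$, completing the proof.
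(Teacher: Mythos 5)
Your proposal correctly identifies the crux --- by Proposition~\ref{Proposition: covariant finite-dimensional C* representations give admissible corepresentations} it suffices to show that $L_U$ is invariant under the scaling group, equivalently that $\pi$ is covariant for $(\widehat{\tau}^u_t)$ --- but it does not actually prove this step. You write that ``the main obstacle is exactly to establish the $\tau$-invariance of $L_U$'' and then offer only a heuristic: that the scaling constant $\nu=1$ should give ``norm control'' making a ``formally Plancherel-style intertwiner'' between $U$ and $(\tau_t\ot\iota)(U)$ a bounded element of $M_n(\IC)$. No such intertwiner is constructed, no formula is given, and it is not explained why one should exist at all; as you yourself note, the matrix coefficients need not lie in any $L^p$-space, so there is no Plancherel-type pairing available to define it. The alternative route via embedding into the bicrossed product $\qg\ltimes_\alpha\IR$ is also not carried out, and the obstruction you flag there (that admissibility is established for representations of $\qg\ltimes_\alpha\IR$, not of $\qg$ itself, and does not obviously pull back) is real. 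So the argument has a genuine gap precisely where the unimodularity hypothesis must do its work.

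For comparison, the paper closes this gap by a short algebraic argument at the universal level that does not construct intertwiners at all. Unimodularity is used to say that the universal modular automorphism groups of the left and right Haar weights coincide, call them $(\sigma^u_t)$. Kustermans' relations then give \emph{both}
$\Delta_u\circ\sigma^u_t=(\tau^u_t\ot\sigma^u_t)\circ\Delta_u$ and
$\Delta_u\circ\sigma^u_t=(\sigma^u_t\ot\tau^u_{-t})\circ\Delta_u$
for the \emph{same} group $(\sigma^u_t)$. Applying these to the matrix coefficients of the universal lift $V$ of $U$ and then slicing with the counit $\epsilon_u$ on one leg yields $\tau^u_t(\sigma^u_t(L_V))\subset L_V$ and $\tau^u_{-t}(\sigma^u_t(L_V))\subset L_V$; composing the two (using that $\tau^u$ and $\sigma^u$ commute) cancels the modular group and gives $\tau^u_{2t}(L_V)\subset L_V$ for all $t$, hence $\tau$-invariance of $L_U$ after applying the reducing morphism. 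If you want to salvage your write-up, this counit-slicing trick is the missing idea; the facts $\delta=1$ and $\nu=1$ that you derive from unimodularity are true but are not by themselves enough to produce the covariance.
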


\begin{proof}
Let $U\in M(C_0(\qg)\ot M_n(\IC))$ be a finite-dimensional unitary
representation, and let $V\in M(C^u_0(\qg)\ot M_n(\IC))$ be the
unique lift of $U$ to the universal level.
Write $V =(V_{ij})$, where $V_{ij}$ are the matrix coefficients of
$V$, and let
\[
L_V = \linsp\set{V_{ij}}{i, j=1, 2, \ldots, n}.
\]
Since $\qg$ is unimodular, the universal modular automorphism groups
of the right and left invariant weights are the same. It then follows from 
\cite[Proposition~9.2]{kus} that
\[
\Delta_u\circ\sigma^u_t = (\tau^u_t\ot\sigma^u_t)\circ\Delta_u
\]
and
\[
\Delta_u\circ\sigma^u_t=(\sigma^u_t\ot\tau^u_{-t})\circ\Delta_u
\]
for every $t\in\IR$, where $\{\sigma^u_t\}_{t\IR}$ denotes the universal modular automorphism group of both the right and left invariant weights. Applying these to the matrix coefficient of $V$,
we obtain 
\[
\Delta_u(\sigma^u_t(V_{ij}))=\sum_{k=1}^n\tau_t^u(V_{ik})\ot\sigma^u_t(V_{kj})
\]
and
\[
\Delta_u(\sigma^u_t(V_{ij}))=\sum_{k=1}^n\sigma^u_t(V_{ik})\ot\tau^u_{-t}(V_{kj}). 
\]
Then applying the counit of $C_0^u(\qg)$ to the above
identities, it follows that $\tau^u_{-t}(\sigma^u_{t}(L_V))\subset L_V$ and
$\tau^u_t(\sigma^u_t(L_V))\subset L_V$ for all $t\in\IR$.
This implies that for $X\in L_V$, 
\[
\tau^u_{2t}(X) = \big(\tau^u_{t}\circ\sigma^u_{-t}\big)\circ
\big(\sigma^u_{t}\circ\tau^u_t\big)(X)\subset L_V.
\]
Therefore, $\tau^u_t(L_V)\subset L_V$ for all $t\in\IR$.

Let $\Lambda_\qg:C^u_0(\qg)\to C_0(\qg)$ be the reducing
morphism. Then $\tau_t\circ\Lambda_\qg=\Lambda_\qg\circ\tau^u_t$ for all
$t\in\IR$ (see \cite[Section 4]{kus}).
Write $U = (U_{ij})$, and note that $\Lambda_\qg(V_{ij})=U_{ij}$.
Since $L_V$ is $\tau^u_t$-invariant, it follows that
$L_U$ is $\tau_t$-invariant. By Proposition \ref{Proposition:
  covariant finite-dimensional C* representations give admissible
  corepresentations}, $U$ is admissible.
\end{proof}

A class of  examples of unimodular quantum groups with non-trivial scaling
groups is given by Drinfeld doubles. 
Given a matched pair of locally compact quantum groups $\qg$ and
$\IH$, the von Neumann algebra $L^\infty(\qg)\otol L^\infty(\IH)$ can
be given the structure of a locally compact quantum group, called the
double crossed product of $\qg$ and $\IH$
\cite[Section 3 \& Theorem 5.3]{double_crossed_product}.
If $\IH=\qgdual$, then a matching
\[
m:L^\infty(\qg)\otol L^\infty(\qgdual)\to L^\infty(\qg)\otol L^\infty(\qgdual)
\]
is given by
\[
m(X)=W_{\qg^\mathrm{op}} X W_{\qg^\mathrm{op}}^\ast,
\]
where $W_{\qg^\mathrm{op}}$ is the left multiplicative unitary of the
opposite quantum group $\qg^\mathrm{op}$ obtained from $\qg$.
The resulting double crossed product quantum group is precisely 
Drinfeld double, as has been shown in
\cite[Section~8]{double_crossed_product}. Moreover,
by \cite[Proposition 8.1]{double_crossed_product}, 
Drinfeld doubles are always unimodular.
It also follows from \cite[Theorem 5.3]{double_crossed_product} that
if $\qg$ has non-trivial scaling group, then the double crossed product has
non-trivial scaling group as well. Thus Drinfeld double
construction produces concrete examples of unimodular locally compact
quantum groups with non-trivial scaling group, and by
Theorem \ref{thm:admissibility-unimodular} 
the Admissibility Conjecture is true for such quantum groups.

\section{Characterisation of Kazhdan's Property T for locally compact
  quantum groups}
\label{Section: Fell topology and weak containment of representations
  of C*-algebras} 

Let $A$ be a C*-algebra and let $\pi:A\to B(H_\pi)$ and
$\rho:A\to B(H_\rho)$ be two non-degenerate
representations. The representation
$\pi$ is said to be \emph{equivalent} to $\rho$ if there
exists a unitary $U:H_\pi\to H_\rho$ such that
$U^\ast\rho(x)U = \pi(x)$ for every $x\in A$. If $U$ is only an
isometry, then we will say that $\pi$ is \emph{contained} in $\rho$
and write $\pi\subset\rho$
(in other words, $\pi$ is a \emph{subrepresentation} of $\rho$).
Now let $\cls$ be a set of representations of $A$. 
We say that the representation $\pi$ is
\emph{weakly contained} in $\cls$
and write $\pi\prec\cls$ if
$\bigcap_{\rho\in\cls}\Ker\rho\subset \Ker\pi$. 
We will adopt the convention that whenever $\pi\prec\{\rho\}$ we will
simply write $\pi\prec\rho$. 

Let $\widehat{A}$ denote the set of inequivalent irreducible
representations of $A$. The \emph{closure} of $\cls\subset\widehat{A}$ is
defined as 
\[
\overline{\cls}= \set{\pi\in\widehat{A}}{\pi\prec\cls}. 
\]
From \cite[Lemma 1.6]{fell} it follows that the above closure defines 
a topology on $\widehat{A}$, which is referred to as the Fell topology
on $\widehat{A}$ (in \cite{fell} this was called the hull-kernel topology on
$\widehat{A}$). The following result
from \cite[Lemma~2.1]{xiao_property_T} is crucial in the sequel.

\begin{ppsn}\label{prop:T} 
Let $A$ be a C*-algebra. 
If $\rho\in\widehat{A}$
is finite-dimensional, then $\{\rho\}$ is a closed subset of
$\widehat{A}$. 
Moreover, the following statements are equivalent for $\rho\in\widehat{A}$:
\begin{enumerate}[label=\textup{(\roman*)}]
\item $\rho$ is an isolated point in $\widehat{A}$.
\item If a representation $\pi$ of $A$ satisfies $\rho\prec\pi$, then
  $\rho\subset\pi$. 
\item $A=\Ker\rho\oplus\bigcap_{\nu\in\widehat{A}\setminus\{\rho\}}\Ker\nu$.
\end{enumerate}
\end{ppsn}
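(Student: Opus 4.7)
My plan is to exploit the fact that a finite-dimensional irreducible representation $\rho$ has $A/\Ker\rho\cong M_n(\IC)$ (for some $n$), so $\Ker\rho$ is a maximal closed two-sided ideal of $A$, together with the Gelfand--Naimark fact that $\bigcap_{\nu\in\widehat A}\Ker\nu = 0$.

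\textbf{Closedness of $\{\rho\}$.} Given $\sigma\in\overline{\{\rho\}}$, by definition $\Ker\rho\subseteq\Ker\sigma$. Since $\Ker\rho$ is maximal and $\Ker\sigma\ne A$, we must have $\Ker\sigma=\Ker\rho$, and then $\sigma$ factors through $A/\Ker\rho\cong M_n(\IC)$, whose unique irreducible representation (up to equivalence) is $\rho$; hence $\sigma\sim\rho$.

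\textbf{(i) $\Leftrightarrow$ (iii).} Set $J = \bigcap_{\nu\in\widehat A\setminus\{\rho\}}\Ker\nu$. First, $\Ker\rho \cap J = \bigcap_{\nu\in\widehat A}\Ker\nu = 0$ by Gelfand--Naimark. Now $\rho$ is isolated iff $\widehat A\setminus\{\rho\}$ is closed iff no $\sigma\in\widehat A$ with $J\subseteq\Ker\sigma$ equals $\rho$, i.e.\ iff $J\not\subseteq\Ker\rho$. By the maximality of $\Ker\rho$, the condition $J\not\subseteq\Ker\rho$ is equivalent to $\Ker\rho + J = A$. Combined with $\Ker\rho\cap J=0$, this is exactly (iii). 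Conversely, (iii) immediately yields $J\not\subseteq\Ker\rho$, so $\rho$ is isolated.

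\textbf{(i) $\Rightarrow$ (ii).} Assume (iii). Since $A = \Ker\rho \oplus J$ as closed two-sided ideals, the quotient map restricts to an isomorphism $J\xrightarrow{\sim}A/\Ker\rho\cong M_n(\IC)$. Suppose $\rho\prec\pi$, i.e.\ $\Ker\pi\subseteq\Ker\rho$. If $J\subseteq\Ker\pi\subseteq\Ker\rho$, then $J = J\cap\Ker\rho = 0$, contradicting $J\cong M_n(\IC)$. So $\pi|_J$ is a nonzero $*$-representation of $M_n(\IC)$, hence a (nonzero) multiple of its unique irreducible representation $\rho|_J$. The associated invariant subspace $\overline{\pi(J)H_\pi}\subseteq H_\pi$ is $\pi(A)$-invariant (since $J$ is an ideal), and on it $\pi|_{\overline{\pi(J)H_\pi}}$ is a multiple of $\rho$; thus $\rho\subseteq\pi$.

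\textbf{(ii) $\Rightarrow$ (i).} Take $\pi = \bigoplus_{\nu\in\widehat A\setminus\{\rho\}}\nu$, so that $\Ker\pi = J$. If $\rho$ were not isolated, the argument above gives $J\subseteq\Ker\rho$, i.e.\ $\rho\prec\pi$. By (ii) then $\rho\subseteq\pi$, forcing $\rho\sim\nu$ for some $\nu\ne\rho$, a contradiction.

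The main subtlety is the (i) $\Rightarrow$ (ii) step: one must verify that $J$ sits as a complemented ideal isomorphic to $M_n(\IC)$ and deduce that $\pi|_J$, being a representation of a matrix algebra, is automatically a direct sum of copies of $\rho|_J$; the rest of the argument is then routine manipulation of ideals and kernels. The closedness of $\{\rho\}$ and the maximality of $\Ker\rho$ are indispensable throughout and it is the finite-dimensionality of $\rho$ (equivalently, $A/\Ker\rho$ being simple) that powers the entire proof.
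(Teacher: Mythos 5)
Your proof is correct. Note that the paper does not prove this proposition at all: it is imported verbatim from Chen--Ng \cite[Lemma~2.1]{xiao_property_T}, so there is no in-paper argument to compare against. Your argument is the standard one and matches the spirit of the cited source: finite-dimensionality of $\rho$ gives $A/\Ker\rho\cong M_n(\IC)$, hence $\Ker\rho$ is a maximal two-sided ideal, which yields both the closedness of $\{\rho\}$ and the equivalence of (i) and (iii) (using that $\bigcap_{\nu\in\widehat A}\Ker\nu=0$, so the sum $\Ker\rho+J$ is automatically direct once it is all of $A$). The one step you rightly flag as delicate, (i)$\Rightarrow$(ii), is handled correctly: $J$ is a complemented ideal isomorphic to $M_n(\IC)$, the essential subspace $\overline{\pi(J)H_\pi}$ is $\pi(A)$-invariant, and since $A$ acts there through $a\mapsto ae$ (with $e$ the unit of $J$) killing $\Ker\rho$, every irreducible $J$-subrepresentation is an $A$-subrepresentation equivalent to $\rho$. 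No gaps.
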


\begin{rmrk}\label{Remark: Notion of weak containment from Brannan}
  It is worthwhile to mention that recently in
  \cite[Definition~3.3]{brannan_property_T} the authors have used a
  slightly different notion of weak containment, namely a C*-algebraic
  version of \cite[Definition~7.3.5]{Zimmer_weak_containment}.
  However, we will be concerned with irreducible representations
  in which case the definition coincide
  (as mentioned after Definition 3.3 in \cite{brannan_property_T}).
\end{rmrk}

The following definition is a natural extension of containment
to the setting of unitary representations of locally compact quantum
groups (see \cite[Definition~3.2]{haagerup_daws_fima_skalski}). 

\begin{dfn}
Let $U\in M(C_0(\qg)\ot \K(H))$ and $V\in M(C_0(\qg)\ot \K(K))$ be two
unitary representations of a locally compact quantum group $\qg$,
and let $\pi_U:C^u_0(\qgdual)\to B(H)$ and
$\pi_V:C^u_0(\qgdual)\to B(K)$ be the associated 
representations  of $C^u_0(\qgdual)$
(i.e.  $U=(\iota\ot\pi_U)(\wW)$ and $V=(\iota\ot\pi_V)(\wW)$).
If $\pi_U\prec\pi_V$, we say $U$ is \emph{weakly contained} in $V$
and write $U\prec V$.  If $\pi_U\subset\pi_V$, we 
say that $U$ is \emph{contained} in $V$
(or that $U$ is a \emph{subrepresentation} of $V$) and write $U\subset V$. 
\end{dfn}

\begin{rmrk}\label{Remark: Tensor Containment}
Let $U \prec V$, so that $\pi_U \prec \pi_V$.  Let $W$ be a corepresentation.
We claim that $W \tp U \prec W \tp V$.  Indeed, this is equivalent to
$\pi_{W\tp U} = (\pi_W\ot\pi_U)\circ\chi\circ\widehat\cop_u
\prec (\pi_W\ot\pi_V)\circ\chi\circ\widehat\cop_u = \pi_{W\tp V}$, which
by definition, is equivalent to showing that $\ker (\pi_W\ot\pi_V)\circ\widehat\cop_u
\subseteq \ker (\pi_W\ot\pi_U)\circ\widehat\cop_u$.

Let $W$ act on $H_W$, and let $a\in \ker (\pi_W\ot\pi_V)\circ\widehat\cop_u$.  For
$\omega \in \K(H_W)^*$ let $b = (\omega\circ\pi_W\ot\id)\widehat\cop_u(a)$, so
that $\pi_V(b) = 0$.  As $\pi_U\prec\pi_V$, it follows that $\pi_U(b)=0$, so
that $(\omega\ot\id) (\pi_W\ot\pi_U)\circ\widehat\cop_u(a)=0$.  As $\omega$ was
arbitrary, $(\pi_W\ot\pi_U)\circ\widehat\cop_u(a)=0$, as required.
\end{rmrk}

We next recall the definitions of invariant and almost invariant
vectors for quantum group representations
(see \cite[Definition 3.2]{haagerup_daws_fima_skalski}).

\begin{dfn}\label{Definition: Invariant and almost invariant vectors}
Let $U\in M(C_0(\qg)\ot \K(H))$ be a representation of a locally
compact quantum group $\qg$. A vector $\xi\in H$ is called
\emph{invariant} for $U$ if $U(\eta\ot\xi)=\eta\ot\xi$ for all
$\eta\in L^2(\qg)$.   
$U$ is said to have \emph{almost invariant vectors} if there exits a net
$(\xi_\alpha)_\alpha$ of unit vectors in $H$ such that
$\|U(\eta\ot\xi_\alpha)-\eta\ot\xi_\alpha\|\to0$ for all
$\eta\in L^2(\qg)$. 
\end{dfn}

Note that if $U$ and $V$ are similar representations of a locally
compact quantum group $\qg$, then
$U$ has an invariant vector if and only if $V$ has.
The analogous statement holds for almost invariant vectors.

We will need invariant means in the following arguments,
so next we set up the necessary terminology for those.

\begin{dfn}\label{Definition: Eberlein algebra for a LCQG}
The (reduced) \emph{Fourier--Stieltjes algebra} of $\qg$ is defined by  
\[
\fs(\qg) = \linsp\set{(\iota\ot\omega)(\wW)}{\omega\in C^u_0(\qgdual)^\ast}. 
\]
Then the \emph{Eberlein algebra} of $\qg$ is defined by   
\[
\eb(\qg) = \overline{\fs(\qg)}^{\|\cdot\|_{B(L^2(\qg))}}. 
\]
\end{dfn}

Note that $\fs(\qg)\subset M(C_0(\qg))$ and that
$\fs(\qg)$ is a subalgebra of $M(C_0(\qg))$.
It then follows that $\eb(\qg)$ is a closed subalgebra
of $M(C_0(\qg))$. When $\qg$ is of Kac type, $\eb(\qg)$ is
self-adjoint and so a C*-subalgebra of $M(C_0(\qg))$
(see for example \cite[Section 7]{matbd}). 

The Banach algebra $L^1(\qg)$ acts on its dual $L^\infty(\qg)$  by
\[
x\cdot\omega =(\omega\ot\iota)(\Delta(x)),
\qquad \omega\in \lone(\qg), x\in\linf(\qg), 
\]
and
\[
\omega\cdot x=(\iota\ot\omega)(\Delta(x)), \qquad \omega\in
\lone(\qg), x\in\linf(\qg),
\]
making $\linf(\qg)$ an $\lone(\qg)$-bimodule. It is easy to see that
$\eb(\qg)$ is invariant under these actions. This allows us to define
the notion of an invariant mean on $\eb(\qg)$.  It follows from
\cite[Proposition 3.15]{daws_skalski_viselter}
that $\eb(\qg)$ admits an \emph{invariant mean} $\mu\in \eb(\qg)^*$
in the sense that $\|\mu\| = \mu(1) = 1$ and 
\[
\mu(\omega\cdot x) = \omega(1) \mu(x) = \mu(x\cdot\omega)
\qquad \omega\in L^1(\qg), x\in \eb(\qg).
\]

Note that the Hopf $*$-algebra $\aphopf(\qg)$ underlying the quantum
Bohr compactification is contained in $\fs(\qg)$, and therefore
$\ap(\qg)\subset \eb(\qg)$. The uniqueness of the Haar state of a
compact quantum group implies the following result. 

\begin{lmma}\label{Lemma: the restriction of the invariant mean to the Bohr compactification is the Haar state}
  Let $M$ be the restriction of the invariant mean $\mu\in \eb(\qg)^*$
  to $\ap(\qg)$. Then $M$ is the Haar state of the
  compact quantum group $(\ap(\qg),\Delta)$.  
\end{lmma}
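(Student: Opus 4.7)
The plan is to verify that $M$ is an invariant state on the compact quantum group $(\ap(\qg),\Delta)$ and then invoke the uniqueness of the Haar state. Note first that $\ap(\qg)$ is a unital C*-subalgebra of $\eb(\qg)$ (the matrix coefficient of the trivial representation gives $1\in\aphopf(\qg)\subseteq \ap(\qg)$), and that the restriction of $\Delta$ from $M(C_0(\qg))$ to $\ap(\qg)$ is precisely the comultiplication of the compact quantum group $(\ap(\qg),\Delta)$. In particular, $\Delta(\ap(\qg))\subseteq\ap(\qg)\ot\ap(\qg)$, which follows by norm-continuity from the analogous (purely algebraic) statement $\Delta(\aphopf(\qg))\subseteq\aphopf(\qg)\ot\aphopf(\qg)$ for matrix coefficients.

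Checking that $M$ is a state is immediate: since $\|\mu\|=\mu(1)=1$ and $1\in\ap(\qg)$, the restriction satisfies $\|M\|\le 1$ and $M(1)=1$. To establish left invariance, fix $a\in\ap(\qg)$ and $\omega\in L^1(\qg)$. Since $\Delta(a)\in\ap(\qg)\ot\ap(\qg)$, the element $\omega\cdot a=(\iota\ot\omega)\Delta(a)$ lies in $\ap(\qg)$, and so the invariance of $\mu$ yields
\[
M\bigl((\iota\ot\omega)\Delta(a)\bigr)=\mu(\omega\cdot a)=\omega(1)\mu(a)=\omega(1)M(a).
\]
Interpreting $(M\ot\iota)\Delta(a)$ as a slice of $\Delta(a)\in\ap(\qg)\ot\ap(\qg)$ and using Fubini,
\[
\omega\bigl((M\ot\iota)\Delta(a)\bigr)=(M\ot\omega)\Delta(a)=M\bigl((\iota\ot\omega)\Delta(a)\bigr)=\omega(1)M(a)=\omega\bigl(M(a)\,1\bigr).
\]
Because $L^1(\qg)$ separates points of $L^\infty(\qg)$, and $\ap(\qg)\subseteq M(C_0(\qg))\subseteq L^\infty(\qg)$, this forces $(M\ot\iota)\Delta(a)=M(a)\,1$. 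A symmetric argument based on the identity $\mu(x\cdot\omega)=\omega(1)\mu(x)$ yields right invariance. Uniqueness of the Haar state of a compact quantum group then identifies $M$ with the Haar state of $(\ap(\qg),\Delta)$.

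The main technical point to get right is the compatibility between the two levels: one must verify that $\Delta$ descends to the coproduct on $\ap(\qg)$ (so that slice maps along one leg land in $\ap(\qg)$ itself, allowing $\mu$ and $M$ to agree on them) and that testing against the predual $L^1(\qg)$ is enough to deduce equalities in $\ap(\qg)$. Both are standard, and once they are in place the proof is essentially a one-line calculation powered by the invariance of $\mu$ followed by the uniqueness theorem for the Haar state.
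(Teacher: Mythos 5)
Your proof is correct and follows exactly the route the paper intends: the paper gives no written proof beyond the remark that ``the uniqueness of the Haar state of a compact quantum group implies the following result'', and your argument simply supplies the details of that one-line plan (restrict $\mu$, check it is a bi-invariant state using the invariance identity $\mu(\omega\cdot x)=\omega(1)\mu(x)=\mu(x\cdot\omega)$, and invoke uniqueness of the Haar state). The compatibility points you flag --- that $\Delta$ restricts to the coproduct of $(\ap(\qg),\Delta)$ and that testing against $L^1(\qg)$ suffices --- are handled correctly.
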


\begin{lmma}\label{Lemma: making sense of tensoring with the mean}
Let $X\in B(L^2(\qg))\otol B(H)$ for some Hilbert space $H$, and suppose
that $(\iota\ot\omega)(X)\in \eb(\qg)$ for all $\omega\in B(H)_\ast$.
Let $\nu\in \eb(\qg)^*$. Then there exists an operator
$T\in B(H)$ such that for all $\omega\in B(H)_\ast$  
\[
\langle T, \omega\rangle = \langle \nu, (\iota\ot\omega)(X)\rangle.
\]
We will be denoting this operator by $(\nu\ot\iota)(X)$, so that
\[
\omega\bigl((\nu\ot\iota)(X)\bigr) = \nu\bigl((\iota\ot\omega)(X)\bigr)
\]
for all $\omega\in B(H)_\ast$.
\end{lmma}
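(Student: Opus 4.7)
The plan is to realise $T$ as the element of $B(H) = (B(H)_\ast)^\ast$ corresponding to a bounded linear functional on $B(H)_\ast$ built from $\nu$ and $X$ in the obvious way.

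First I would recall that for $X \in B(L^2(\qg))\otol B(H)$ and $\omega \in B(H)_\ast$, the right slice map is well defined as a normal bounded linear map
\[
(\iota \ot \omega) : B(L^2(\qg))\otol B(H) \to B(L^2(\qg)),
\]
characterised by $\rho\bigl((\iota\ot\omega)(X)\bigr) = (\rho\ot\omega)(X)$ for every $\rho \in B(L^2(\qg))_\ast$, and satisfying the norm bound $\|(\iota\ot\omega)(X)\| \le \|X\|\,\|\omega\|$. The hypothesis tells us moreover that $(\iota\ot\omega)(X) \in \eb(\qg)$, so applying $\nu \in \eb(\qg)^\ast$ to it makes sense.

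Next I would define
\[
\Phi : B(H)_\ast \to \IC, \qquad \Phi(\omega) = \nu\bigl((\iota\ot\omega)(X)\bigr),
\]
and check it is a bounded linear functional. Linearity is immediate from the linearity of $\omega \mapsto (\iota\ot\omega)(X)$ and of $\nu$. For boundedness, note that by Definition \ref{Definition: Eberlein algebra for a LCQG} the norm on $\eb(\qg)$ is the operator norm inherited from $B(L^2(\qg))$, so
\[
|\Phi(\omega)| \le \|\nu\|\,\|(\iota\ot\omega)(X)\|_{B(L^2(\qg))} \le \|\nu\|\,\|X\|\,\|\omega\|.
\]
Using the standard identification $B(H) \cong (B(H)_\ast)^\ast$ via the trace pairing, the functional $\Phi$ corresponds to a unique operator $T \in B(H)$ such that $\omega(T) = \Phi(\omega)$ for every $\omega \in B(H)_\ast$. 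This $T$ is the required operator, and we set $(\nu\ot\iota)(X) := T$.

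There is no genuinely hard step here; the only point requiring care is verifying that the norm on $\eb(\qg)$ appearing in the estimate $|\nu(y)| \le \|\nu\|\,\|y\|$ is the operator norm, which is settled by the definition of $\eb(\qg)$ as a norm closure inside $B(L^2(\qg))$. Beyond that, the argument is a direct application of the duality $B(H) = (B(H)_\ast)^\ast$ combined with the standard normal slice map on a spatial tensor product of von Neumann algebras.
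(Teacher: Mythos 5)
Your proposal is correct and follows essentially the same route as the paper: the paper simply observes that $\omega\mapsto\nu\bigl((\iota\ot\omega)(X)\bigr)$ is a bounded functional on $B(H)_\ast$ and invokes the duality $B(H)=(B(H)_\ast)^\ast$, which is exactly your argument with the norm estimate spelled out. No further comment is needed.
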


\begin{proof}
The map
\[
\omega\mapsto\langle \nu, (\iota\ot\omega)(X)\rangle
: B(H)_\ast \to \IC
\]
defines a bounded functional on $B(H)_\ast$, which gives the existence
of the operator $T\in B(H)$. 
\end{proof}
The next result gives a formula for the orthogonal projection onto the set of invariant vectors for a unitary corepresentation of $C_0(\qg)$ (also see \cite[Proposition 3.14]{daws_skalski_viselter}).
\begin{lmma}\label{Corollary: expressing the projection onto the set of invarint subspace in terms of the invariant mean}
  Let $V\in M(C_0(\qg)\ot \K(H))$ be a unitary representation of $\qg$
  and let $\mu\in \eb(\qg)^*$ be the invariant mean on $\eb(\qg)$.
  The operator $P = (\mu\ot\iota)(V)\in B(H)$
  is the projection onto the subspace of invariant vectors for $V$ (we may note that the definition of $P$ makes sense because of Lemma \ref{Lemma: making sense of tensoring with the mean}).
\end{lmma}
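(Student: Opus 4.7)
My plan is to establish: (a) that $P$ is well-defined and contractive; (b) the operator identity $V(1\otimes P) = 1\otimes P$; and (c) that $P$ is a projection whose range is precisely the subspace of invariant vectors. The main technical ingredient is the slicing identity
\[
(\iota\otimes\mu)\Delta(f) = \mu(f)\cdot 1, \qquad f\in\eb(\qg),
\]
which I will derive from the invariance of $\mu$ via an $L^1(\qg)$-duality argument: since $\eb(\qg)$ is invariant under the $L^1(\qg)$-action, $(\omega\otimes\iota)\Delta(f)\in\eb(\qg)$ for every $\omega\in L^1(\qg)$, and invariance of $\mu$ gives $\mu((\omega\otimes\iota)\Delta(f)) = \omega(1)\mu(f) = \omega(\mu(f)\cdot 1)$; the right-slicing analogue of the preceding lemma then identifies $(\iota\otimes\mu)\Delta(f)$ with $\mu(f)\cdot 1\in\eb(\qg)$.

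Since $V = (\iota\otimes\pi_V)(\wW)$ for the associated non-degenerate representation $\pi_V\colon C_0^u(\qgdual)\to B(H)$, each matrix coefficient $(\iota\otimes\omega)(V) = (\iota\otimes\omega\circ\pi_V)(\wW)$ lies in $\fs(\qg)\subseteq\eb(\qg)$, so the preceding lemma produces $P\in B(H)$ with $\|P\|\leq\|\mu\|\,\|V\|=1$. For the operator identity $V(1\otimes P) = 1\otimes P$, the representation identity $(\Delta\otimes\iota)(V) = V_{13}V_{23}$ together with a standard leg-slicing computation gives, for every $\omega\in L^1(\qg)$ and $\omega'\in B(H)_\ast$,
\[
(\omega\otimes\omega')\bigl(V(1\otimes P)\bigr) = (\omega\otimes\mu\otimes\omega')(V_{13}V_{23}) = (\omega\otimes\mu)\Delta(f_{\omega'}) = \omega(1)\mu(f_{\omega'}) = \omega(1)\omega'(P),
\]
where $f_{\omega'} = (\iota\otimes\omega')(V)\in\eb(\qg)$ and the slicing identity is applied in the penultimate step. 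Since $(\omega\otimes\omega')(1\otimes P) = \omega(1)\omega'(P)$, this yields $V(1\otimes P) = 1\otimes P$.

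From $V(1\otimes P) = 1\otimes P$ it is immediate that every vector in the range of $P$ is invariant, because $V(\eta\otimes P\xi) = (1\otimes P)(\eta\otimes\xi) = \eta\otimes P\xi$ for all $\eta\in L^2(\qg)$ and $\xi\in H$. Applying $(\mu\otimes\iota)$ to both sides of the same identity yields $P^2 = P$; combined with $\|P\|\leq 1$, the standard fact that a contractive idempotent on a Hilbert space is self-adjoint shows that $P$ is an orthogonal projection. Finally, if $\xi\in H$ is invariant then for every $\zeta\in H$ the functional $\omega_{\xi,\zeta}\in B(H)_\ast$ defined by $\omega_{\xi,\zeta}(T) = \langle T\xi,\zeta\rangle$ satisfies $(\iota\otimes\omega_{\xi,\zeta})(V) = \langle\xi,\zeta\rangle\cdot 1$ directly from the invariance of $\xi$, so applying $\mu$ and using $\mu(1)=1$ gives $\langle P\xi,\zeta\rangle = \langle\xi,\zeta\rangle$, whence $P\xi=\xi$. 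Therefore $P$ is the orthogonal projection onto the subspace of invariant vectors.

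The main technical obstacle is the careful handling of slicings by $\mu$: because $\mu$ is defined only on the subalgebra $\eb(\qg)\subseteq M(C_0(\qg))$ and is not assumed normal on any larger space, every expression of the form $(\iota\otimes\mu)(\cdot)$ or $(\mu\otimes\iota)(\cdot)$ must be realised through the $L^1(\qg)$-duality used above, and the slicing identity has to be invoked at each step rather than obtained from a naive Fubini argument.
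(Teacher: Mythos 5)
Your proposal is correct and follows essentially the same route as the paper: derive $V(1\otimes P)=1\otimes P$ from the representation identity $(\Delta\otimes\iota)(V)=V_{13}V_{23}$ and the invariance of $\mu$ (your ``slicing identity'' is exactly the invariance property $\mu(f\cdot\omega)=\omega(1)\mu(f)$ that the paper invokes inline), conclude that $\operatorname{Ran}P$ consists of invariant vectors, note that $P$ is a contractive idempotent and hence an orthogonal subprojection, and finally show $P\xi=\xi$ for invariant $\xi$ via the matrix coefficient $(\iota\otimes\omega_{\xi,\zeta})(V)=\langle\xi,\zeta\rangle 1$. The only difference is presentational: you isolate the slicing identity as a named lemma, while the paper carries out the same $L^1(\qg)$-duality manipulation directly inside the computation.
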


\begin{proof}
  Due to the subtle definition of $(\mu\ot\iota)(V)$, we include
  a careful calculation of the fact that the image of $P$
  consists of invariant vectors. Given $\sigma\in \lone(\qg)$
  and $\omega\in B(H)_*$, we have
  \[
  (\sigma\ot\omega)\bigl(V(1\ot P)\bigr)
  = \omega((\sigma\ot \id)(V)P)
   = \mu\bigl((\sigma\ot\id\ot\omega)(V_{13}V_{23})\bigr)
   \]
   due to the commutation relation in
   Lemma~\ref{Lemma: making sense of tensoring with the mean}.
   Continuing from here using the fact that $V$ is a representation,
   we have
  \begin{align*} 
   (\sigma\ot\omega)\bigl(V(1\ot P)\bigr)
   &= \mu\bigl((\sigma\ot\id\ot\omega)((\cop\ot\id)(V))\bigr)
   = \mu\bigl((\sigma\ot\id)(\cop((\id\ot\omega)(V)))\bigr)\\
   &= \sigma(1)\mu((\id\ot\omega)(V))
   = (\sigma\ot\omega)(1\ot P)
   \end{align*}
   due to the invariance of $\mu$ and the fact that $\mu((\id\ot\omega)(V))=\omega(P)$ by virtue of Lemma \ref{Lemma: making sense of tensoring with the mean}. 
   It follows that the image of $P$ consists of invariant vectors.
   That $P$ is an idempotent map is also easy to compute.  
   Notice that $\|P\|\leq 1$, because $\|\mu\|=1$, from which it follows that
   $P$ is a subprojection of the orthogonal projection onto the subspace of invariant vectors.
   
   We now prove that indeed $P$ is the projection onto the subspace of invariant vectors. We show that for any invariant vector $\xi$, $P\xi=\xi$, from which the result follows. So let then $\xi$ be an invariant vector. Let $\eta\in H$. Letting $\omega_{\xi,\eta}(\cdot)\in B(H)_\ast$ be the functional given by $\omega_{\xi,\eta}(x):=\langle x\xi,\eta\rangle$ for $x\in B(H)$  we have
   \begin{align*}
   \omega_{\xi,\eta}(P)& = \omega_{\xi,\eta}\big((\mu\ot\iota)(V)\big)\\
   &=\mu\big((\iota\ot\omega_{\xi,\eta})(V)\big)\quad(\text{by Lemma \ref{Lemma: making sense of tensoring with the mean}})\\
   &=\mu(1.\omega_{\xi,\eta}(1))\quad(\text{as $\xi$ is invariant})\\
   &=\omega_{\xi,\eta}(1)\quad(\text{as $\mu(1)=1$}).
   \end{align*}
   As this holds for any $\eta\in H$, we have that $P\xi=\xi$, as we wanted.
\end{proof}

The following result from
\cite[Corollary~2.5 and Corollary~2.8]{haagerup_daws_fima_skalski}
gives a nice criterion for the existence of invariant and almost
invariant vectors. Denote the trivial representation of $\qg$ by~$1$.

\begin{ppsn}
\label{prop:inv-cont} 
Let $U\in M(C_0(\qg)\ot \K(H))$ be a unitary
representation of a locally compact quantum group $\qg$.  
\begin{enumerate}[label=\textup{(\roman*)}]
 \item 
 $U$ has a nonzero invariant vector if and only if $1\subset U$.
 \item
 $U$ has almost invariant vectors if and only if $1\prec U$.
\end{enumerate}
\end{ppsn}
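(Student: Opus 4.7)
The strategy is to pass to the associated C*-algebraic representation $\pi_U : C^u_0(\qgdual) \to B(H)$ defined by $U = (\iota \ot \pi_U)(\wW)$, observing that the trivial representation $1$ of $\qg$ corresponds exactly to the counit $\widehat\epsilon_u$, so that the statements $1 \subset U$ and $1 \prec U$ translate into $\widehat\epsilon_u \subset \pi_U$ and $\widehat\epsilon_u \prec \pi_U$, respectively. The key observation is that slicing the bicharacter yields $\widehat\epsilon_u\bigl((\omega \ot \iota)(\wW)\bigr) = \omega(1)$ for every $\omega \in L^1(\qg)$, and that the linear span of such slices is norm-dense in $C^u_0(\qgdual)$.

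For (i), I would first establish the equivalence
\[
U(\eta \ot \xi) = \eta \ot \xi \text{ for all } \eta \in L^2(\qg) \quad \Longleftrightarrow \quad \pi_U(x)\xi = \widehat\epsilon_u(x)\xi \text{ for all } x \in C^u_0(\qgdual).
\]
In the forward direction, from $\langle U(\eta_2 \ot \xi), \eta_1 \ot \xi' \rangle = \langle \eta_2, \eta_1 \rangle \langle \xi, \xi' \rangle$ one reads off $(\omega \ot \iota)(U)\xi = \omega(1)\xi$ for every $\omega \in L^1(\qg)$; substituting $x = (\omega \ot \iota)(\wW)$ recognises this as $\pi_U(x)\xi = \widehat\epsilon_u(x)\xi$, and the identity extends to all of $C^u_0(\qgdual)$ by density and continuity. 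The reverse implication reverses the computation. Statement (i) is then immediate: a nonzero invariant vector spans a one-dimensional $\pi_U$-invariant subspace on which $\pi_U$ acts as $\widehat\epsilon_u$, giving $\widehat\epsilon_u \subset \pi_U$; and conversely, any such subrepresentation produces an invariant vector.

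For (ii), I would invoke the general C*-algebraic fact that a pure state of a C*-algebra $A$ is weakly contained in a non-degenerate representation $\pi$ precisely when it lies in the weak-$*$ closure of the vector states of $\pi$. Applied to $\widehat\epsilon_u$ and $\pi_U$, this converts $1 \prec U$ into the existence of a net of unit vectors $\xi_\alpha \in H$ such that the vector states $x \mapsto \langle \pi_U(x)\xi_\alpha, \xi_\alpha\rangle$ converge weak-$*$ to $\widehat\epsilon_u$. Testing on slices of $\wW$, this amounts to $\langle (\omega \ot \iota)(U)\xi_\alpha, \xi_\alpha \rangle \to \omega(1)$ for every $\omega \in L^1(\qg)$. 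The almost invariance condition then follows from the unitarity identity
\[
\|U(\eta \ot \xi_\alpha) - \eta \ot \xi_\alpha\|^2 = 2\|\eta\|^2 - 2\Re \langle (\omega_{\eta,\eta} \ot \iota)(U)\xi_\alpha, \xi_\alpha \rangle,
\]
where $\omega_{\eta,\eta}(y) = \langle y\eta, \eta \rangle$; polarisation handles the passage between these diagonal functionals and an arbitrary $\omega \in L^1(\qg)$.

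The main obstacle I anticipate is the forward direction of (ii): converting the abstract weak containment $\widehat\epsilon_u \prec \pi_U$ into the existence of concrete approximating vector states. The remainder is a chain of translations, with uniform boundedness of vector states (each of norm $\le 1$) ensuring that pointwise convergence on the dense subspace $\{(\omega \ot \iota)(\wW) : \omega \in L^1(\qg)\}$ extends to all of $C^u_0(\qgdual)$.
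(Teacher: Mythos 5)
The paper does not prove this proposition itself; it imports it from Daws--Fima--Skalski--White \cite[Corollaries 2.5 and 2.8]{haagerup_daws_fima_skalski}, and your argument is correct and is essentially the one given there: translate invariance of $\xi$ into $\pi_U(\cdot)\xi=\widehat\epsilon_u(\cdot)\xi$ via the density of the slices $(\omega\ot\iota)(\wW)$, and handle (ii) by the Fell--Dixmier characterisation of weak containment of a pure state (the counit is a character, hence pure) as weak-$*$ approximation by vector states. The only point worth making explicit is that in the forward direction of (ii) the approximating vectors may a priori have norm $\le 1$; weak-$*$ lower semicontinuity of the norm forces $\|\xi_\alpha\|\to1$, so normalising costs nothing, and the rest of your chain of translations goes through as written.
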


We now recall the definition of Kazhdan's Property (T) for quantum groups
(see \cite[Definition 3.1]{xiao_property_T}, which
goes back to \cite[Definition 6]{Fima_property_T_discrete_quantum_group}).

\begin{dfn}\label{Definition: Kazhdan property (T) for QG}
  A locally compact quantum group $\qg$ has (\emph{Kazhdan's})
  \emph{Property (T)}
  if every unitary representation of $\qg$
  that has almost invariant vectors has a nonzero invariant vector.
\end{dfn}

It follows from Proposition~\ref{prop:inv-cont}
that $\qg$ has Property (T) if and only if
for every unitary representation $U$ of $\qg$
\[
1\prec U\implies 1\subset U.
\]

The following theorem
\cite[Proposition 3.2 and Theorem 3.6]{xiao_property_T}
and \cite[Theorem 4.7]{brannan_property_T} 
gives a series of equivalent conditions to Property (T). 

\begin{thm}\label{thm:T-equi}
  Let $\qg$ be a locally compact quantum group.
The following statements are equivalent:
\begin{enumerate}[label=\textup{(T\arabic*)}]
\item $\qg$ has Property (T).
\item The counit $\widehat{\epsilon}_u$ is an isolated point in
  $\widehat{C^u_0(\qgdual)}$.
\item $C^u_0(\qgdual)=\Ker\widehat{\epsilon}_u\oplus\IC$.
\item There is a projection $p\in M(C^u_0(\qgdual))$ such that
  $p C^u_0(\qgdual) p = \IC p$ and $\widehat{\epsilon}_u(p)=1$. 
\end{enumerate}
If $\qg$ has trivial scaling group, then the above conditions are further
equivalent to the following (quantum version of Wang's theorem \cite[Theorem 2.1]{wang_property_T}):
\begin{enumerate}[label=\textup{(T\arabic*)}] \setcounter{enumi}{4}
 \item 
Every finite-dimensional irreducible representation of
$C^u_0(\qgdual)$ is an isolated point in $\widehat{C^u_0(\qgdual)}$. 
\item
  $C^u_0(\qgdual)\cong B\oplus M_n(\IC)$ for some C*-algebra $B$ and
  $n\in\IN$. 
\end{enumerate}
\end{thm}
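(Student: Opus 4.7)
The strategy is to establish the cycle (T1) $\Leftrightarrow$ (T2) $\Leftrightarrow$ (T3) $\Leftrightarrow$ (T4) in full generality, and then under the trivial-scaling hypothesis add in (T5) and (T6) via the Admissibility Conjecture for Kac algebras (Remark \ref{Remark: Admissiblity conjecture is true for Kac algebras}) together with Wang-type tensor product arguments.

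For (T1) $\Leftrightarrow$ (T2), combining the bijection $U \leftrightarrow \pi_U$ (which sends the trivial representation to the counit $\widehat{\epsilon}_u$), Proposition \ref{prop:inv-cont} (translating (almost) invariance into (weak) containment of $\widehat{\epsilon}_u$), and Proposition \ref{prop:T}(ii) applied to the one-dimensional irreducible representation $\widehat{\epsilon}_u$ yields the equivalence. The equivalence (T2) $\Leftrightarrow$ (T3) is a direct application of Proposition \ref{prop:T}(iii), noting that $\bigcap_{\nu \neq \widehat{\epsilon}_u}\Ker\nu$ is the one-dimensional complementary ideal. For (T3) $\Leftrightarrow$ (T4), the direction (T3) $\Rightarrow$ (T4) is immediate (take $p$ to be the unit of the $\IC$-summand). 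For the converse, I would show that any $p \in M(C_0^u(\qgdual))$ satisfying $pC_0^u(\qgdual)p = \IC p$ and $\widehat{\epsilon}_u(p) = 1$ must be central by a Peirce decomposition argument: for $x = pa(1-p)$, the element $xx^* = pa(1-p)a^*p$ lies in $pC_0^u(\qgdual)p = \IC p$, so $xx^* = \widehat{\epsilon}_u(xx^*)p = |\widehat{\epsilon}_u(x)|^2 p = 0$ (since $\widehat{\epsilon}_u(1-p) = 0$), forcing $pa = pap$; analogously $ap = pap$, so $p$ is central and (T3) follows. Finally, (T4) $\Rightarrow$ (T1): if $U$ has almost-invariant vectors, then $\widehat{\epsilon}_u \prec \pi_U$ gives $\pi_U(p) \neq 0$, and the relation $ap = pap = \widehat{\epsilon}_u(a)p$ shows that $\pi_U$ restricted to $\pi_U(p)H$ factors through $\widehat{\epsilon}_u$, yielding a non-zero subspace of invariant vectors for $U$.

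Under the trivial-scaling hypothesis every finite-dimensional unitary $\qg$-representation is admissible by Remark \ref{Remark: Admissiblity conjecture is true for Kac algebras}, so (T5) $\Rightarrow$ (T2) is immediate and (T2) $\Rightarrow$ (T6) follows from (T3) with $n = 1$. For (T1) $\Rightarrow$ (T5), let $\sigma$ be a finite-dimensional irreducible representation of $C_0^u(\qgdual)$ with $\sigma \prec \pi$. The associated $\qg$-representation $U_\sigma$ is admissible, and by Remark \ref{Remark: CQG} the tensor $\overline{U_\sigma} \tp U_\sigma$ contains the trivial representation. Remark \ref{Remark: Tensor Containment} upgrades $U_\sigma \prec U_\pi$ to $\overline{U_\sigma} \tp U_\sigma \prec \overline{U_\sigma} \tp U_\pi$, whence $1 \prec \overline{U_\sigma} \tp U_\pi$; (T1) then gives $1 \subset \overline{U_\sigma} \tp U_\pi$. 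The Frobenius-reciprocity isomorphism $\overline{H_\sigma} \ot H_\pi \cong B(H_\sigma, H_\pi)$ converts the invariant vector into a non-zero $\qg$-intertwiner $H_\sigma \to H_\pi$, and Schur's lemma (irreducibility of $\sigma$) yields $\sigma \subset \pi$. The closing implication (T6) $\Rightarrow$ (T1) extracts an isolated finite-dimensional irreducible $\sigma_0$ from the $M_n(\IC)$-summand and runs a parallel argument: from $1 \prec U_\rho$ one obtains $U_{\sigma_0} \subset U_{\sigma_0} \tp U_\rho$, and hence $1 \subset \overline{U_{\sigma_0}} \tp U_{\sigma_0} \tp U_\rho$; combined with the isotypic decomposition $\overline{U_{\sigma_0}} \tp U_{\sigma_0} = 1 \oplus V$, this is shown to produce an invariant vector for $U_\rho$.

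The main technical obstacle is the last extraction step in (T6) $\Rightarrow$ (T1): the naive "trace" contraction of the invariant vector in $\overline{H_{\sigma_0}} \ot H_{\sigma_0} \ot H_\rho$ against the canonical invariant of $\overline{U_{\sigma_0}} \tp U_{\sigma_0}$ can vanish, so the argument requires either a careful projection onto the trivial isotypic component of $\overline{U_{\sigma_0}} \tp U_{\sigma_0}$ or a descent/iteration exploiting the irreducible structure of $\sigma_0$ inside the compact quantum group $\ap(\qg)$ guaranteed by admissibility. Additional care concerns the interplay between the unitary antipode $R_u$ (used to define $U_\sigma^c$) and the flip $\chi \circ \widehat{\cop}_u$ defining the tensor product of $C_0^u(\qgdual)$-representations, and verifying that the Frobenius-reciprocity identification intertwines the relevant coactions at the $C_0^u(\qgdual)$ level rather than merely at the $\qg$ level.
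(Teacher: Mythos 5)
First, a point of order: the paper does not prove Theorem~\ref{thm:T-equi} at all --- it is quoted from \cite{xiao_property_T} and \cite{brannan_property_T}. What the paper does prove is the generalisation in Theorem~\ref{thm:kazhdan}, and it is against that argument that your second half should be measured. Your treatment of (T1)--(T4) is correct and complete: the translation of Property (T) into ``$\widehat{\epsilon}_u\prec\pi\implies\widehat{\epsilon}_u\subset\pi$'' via Proposition~\ref{prop:inv-cont}, the appeal to Proposition~\ref{prop:T}, and the Peirce-decomposition argument showing that a projection $p$ with $pC_0^u(\qgdual)p=\IC p$ and $\widehat{\epsilon}_u(p)=1$ is automatically central (since $\widehat{\epsilon}_u(pa(1-p)a^*p)=0$ forces $pa(1-p)=0$) are all sound. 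Likewise (T5)$\implies$(T2), (T2)$\implies$(T6), and (T1)$\implies$(T5) go through: the last of these is essentially the paper's Theorem~\ref{Theorem: Tensor product contains trivial representation implies that each contains a finite-dimensional representation} specialised to irreducible $\sigma$, and your Frobenius-reciprocity phrasing is a legitimate repackaging of the intertwiner $y=(\mu\ot\iota)(U^*(1\ot x)V)$ constructed there.

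The genuine gap is (T6)$\implies$(T1), and it is exactly where you flag an ``obstacle'': your plan to produce an invariant vector for $U_\rho$ out of $1\subset U_{\sigma_0}^c\tp U_{\sigma_0}\tp U_\rho$ does not work, and no amount of projecting onto the trivial isotypic component of $U_{\sigma_0}^c\tp U_{\sigma_0}$ fixes it. All that the tensor-product theorem yields from $1\subset (U_{\sigma_0}^c\tp U_{\sigma_0})\tp U_\rho$ is that $U_\rho$ and $U_{\sigma_0}^c\tp U_{\sigma_0}$ share a common finite-dimensional subrepresentation; this shows $U_\rho$ is not weakly mixing, but gives no control over \emph{which} constituent of $U_{\sigma_0}^c\tp U_{\sigma_0}$ occurs, so one cannot conclude $1\subset U_\rho$. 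The paper's Theorem~\ref{thm:kazhdan} takes a different route: it proves (T6)$\implies$(T2) by contradiction entirely inside the Fell topology. Assuming $\widehat{\epsilon}_u$ is not isolated, one picks a net of \emph{irreducible} $\rho_\alpha$ converging to $\widehat{\epsilon}_u$ and avoiding the finitely many irreducible, covariant constituents $\pi_0=\widehat{\epsilon}_u,\pi_1,\dots,\pi_m$ of $\pi_{U^c\tp U}$ (where $U$ corresponds to the $M_n(\IC)$-summand representation $\mu$); isolation of $\mu$ upgrades $\mu\prec\bigoplus_\alpha(\mu\ot\rho_\alpha)\circ\chi\circ\widehat{\cop}_u$ to containment, irreducibility pins down a single $\alpha$ with $U\subset U\tp U_\alpha$, and then the tensor-product theorem applied to $1\subset (U^c\tp U)^c\tp U_\alpha$ forces $U_\alpha=\pi_{k_0}$ for some $k_0$ --- a contradiction. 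The crucial point is that the common-subrepresentation conclusion is only ever used to \emph{identify an irreducible representation outright}, never to manufacture an invariant vector in an arbitrary representation; that is the idea missing from your sketch, and without it the implication (T6)$\implies$(T1) is not established.
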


We will prove that for a locally compact quantum group with
non-trivial scaling group, (T5) as well as 
a suitable generalisation of (T6) are equivalent with (T1)--(T4). 

We first make two observations, which will be used later (also see \cite[Proposition 3.14~\&~Proposition 3.15]{viselter}). 

\begin{lmma}\label{lemma:trivial}
Let $U\in M(C_0(\qg)\ot M_n(\IC))$ be a
finite-dimensional admissible unitary representation of $\qg$. Then
$1\subset U^c\tp U$. 
\end{lmma}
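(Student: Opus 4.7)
The plan is to produce an explicit invariant vector for $U^c \tp U$ and then invoke Proposition~\ref{prop:inv-cont}(i). The admissibility hypothesis enters precisely through Remark~\ref{Remark: CQG}: since $U=(U_{ij})$ is admissible, its contragredient $U^c$ is (unitarily) equivalent, as a representation of $C_0(\qg)$, to the \emph{conjugate} $\conj{U} := (U_{ij}^*)_{i,j=1}^n$. Because representation equivalence respects tensor products and the existence of invariant vectors is invariant under similarity, it suffices to exhibit a nonzero invariant vector for $\conj{U}\tp U$.

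Let $\{e_1,\dots,e_n\}$ be the standard basis of $\IC^n$ and $\{\conj{e_1},\dots,\conj{e_n}\}$ the dual basis of the conjugate space $\conj{\IC^n}$. I would propose the ``maximally entangled'' vector
\[
\xi \;:=\; \sum_{i=1}^n \conj{e_i} \ot e_i \;\in\; \conj{\IC^n}\ot \IC^n,
\]
and verify directly that $\xi$ is invariant for $\conj{U}\tp U = \conj{U}_{12} U_{13}$. Computing leg by leg, for any $\eta \in L^2(\qg)$,
\[
(\conj{U}_{12} U_{13})(\eta \ot \conj{e_j}\ot e_j)
\;=\; \sum_{k,l} U_{lj}^{*}\,U_{kj}\,\eta \ot \conj{e_l}\ot e_k,
\]
and summing over $j$ gives
\[
(\conj{U}\tp U)(\eta\ot\xi)
= \sum_{j,k,l} U_{lj}^{*}U_{kj}\,\eta\ot\conj{e_l}\ot e_k
= \sum_{k,l}\delta_{kl}\,\eta\ot\conj{e_l}\ot e_k
= \eta\ot\xi,
\]
the middle step being the unitarity relation $UU^{*}=1$, i.e.\ $\sum_j U_{kj}U_{lj}^{*}=\delta_{kl}$. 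Hence $\xi$ is a nonzero invariant vector for $\conj{U}\tp U$, and Proposition~\ref{prop:inv-cont}(i) yields $1\subset \conj{U}\tp U$. Transferring this back through the equivalence $\conj{U}\cong U^c$ produces an invariant vector for $U^c\tp U$, so $1\subset U^c\tp U$.

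I do not anticipate any real obstacle: the entire content is the identification of $U^c$ with the concrete matrix $\conj{U} = (U_{ij}^*)$, which is exactly what admissibility --- equivalently, the compact quantum group structure on $AP(\qg)$ carrying $U$ --- provides via Remark~\ref{Remark: CQG}. Without admissibility the contragredient $U^c=(R\ot\top)U$ is an abstract object built from the (in general unbounded) polar data of the antipode, and no canonical formula for an invariant vector in $U^c\tp U$ is visible. Once the identification is made, the verification is the same formal calculation that proves the analogous statement in the compact quantum group setting, using nothing beyond the unitarity of $U$.
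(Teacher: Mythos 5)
Your overall strategy --- replace $U^c$ by $\conj{U}=(U_{ij}^*)_{i,j}$ via Remark~\ref{Remark: CQG} and exhibit an explicit invariant vector --- is reasonable, but the central computation contains a genuine error, precisely at the point where noncommutativity matters. Acting with $\conj{U}_{12}U_{13}$ on $\eta\ot\conj{e_j}\ot e_j$ and summing over $j$ produces the coefficient $\sum_j U_{lj}^*U_{kj}$, whereas the unitarity relation $UU^*=1$ that you invoke gives $\sum_j U_{kj}U_{lj}^*=\delta_{kl}$ --- the same factors in the opposite order. The identity you actually need, $\sum_j U_{lj}^*U_{kj}=\delta_{kl}$, says that the transpose $U^t$ is an isometry; this is not a consequence of unitarity of $U$ and in general fails. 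For the fundamental representation of $SU_q(2)$ one has $\sum_j U_{1j}^*U_{1j}=\alpha^*\alpha+q^2\gamma\gamma^*=1+(q^2-1)\gamma^*\gamma\neq 1$. So the maximally entangled vector is \emph{not} invariant for $\conj{U}\tp U$; it is invariant for the product in the other order, $U\tp\conj{U}$, which is not what the lemma asserts. (This is the familiar compact-quantum-group phenomenon that the conjugate of a unitary representation need not be unitary; the genuine invariant vector in $\conj{H}\ot H$ is a weighted vector $\sum_i \conj{e_i}\ot Fe_i$ involving the positive intertwiner between $U$ and its double contragredient.)

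The paper's proof sidesteps this entirely. It forms $V=\conj{U}\tp U$, which lies in $\eb(\qg)\ot M_n(\IC)\ot M_n(\IC)$ because admissibility puts the entries $U_{ij}^*$ into the Eberlein algebra, applies the invariant mean $\mu$ on $\eb(\qg)$, and shows $(\mu\ot\iota)(V)\neq0$ using the \emph{other} unitarity relation $\sum_i U_{ij}^*U_{ij}=(U^*U)_{jj}=1$, whence $\mu$ of that sum is $\mu(1)=1\neq0$. By Lemma~\ref{Corollary: expressing the projection onto the set of invarint subspace in terms of the invariant mean}, $(\mu\ot\iota)(V)$ is the projection onto the invariant vectors of $V$, so a nonzero invariant vector exists even though no explicit formula for it is written down; similarity of $\conj{U}$ with $U^c$ then finishes the argument. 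If you want to salvage your explicit approach you would need to import the $F$-matrix from the representation theory of the compact quantum group $AP(\qg)$, at which point you are essentially re-deriving the standard conjugate-equations fact rather than giving a shorter proof.
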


\begin{proof}
  Write $U=(U_{ij})_{i,j=1}^n$, and define $\overline{U} = (U^*_{ij})_{i,j=1}^n$. 
  Since $U$ is admissible, $\overline{U}\in \eb(\qg)\ot M_n(\IC)$.
  Define $V =\overline{U}\tp U \in \eb(\qg)\ot (M_n(\IC)\ot M_n(\IC))$.
  Let $\mu$ be the invariant mean on  $\eb(\qg)$.
  We will show that $(\mu\ot\iota)(V)\neq0$, where we are using the
  notation of Lemma \ref{Lemma: making sense of tensoring with the
    mean}. 
Towards a contradiction, suppose that $(\mu\ot\iota)(V)=0$, so that for all
$\nu\in (M_n(\IC)\ot M_n(\IC))_\ast$ we have
$\mu((\id\ot \nu)(V))=0$. Choosing $\nu$ such that
$(\iota\ot \nu)(V)=U^\ast_{ij}U_{ij}$
leads to $\mu(U^\ast_{ij}U_{ij})=0$ for all $i,j = 1$,~$2$,
\ldots,~$n$. Therefore, 
\[
\mu\biggl(\sum_{i=1}^nU^\ast_{ij}U_{ij}\biggr) = \mu(1) = 0,
\]
which cannot happen as $\mu(1) = 1$. 

By Lemma~\ref{Corollary: expressing the projection onto the set of invarint subspace in terms of the invariant mean},
any vector in the image of $(\mu\ot\iota)(V)$
is an invariant vector for $V$.
Since $\overline{U}$ is similar to $U^c$ (as $U$ is admissible, Remark~\ref{Remark: CQG}), the
representation $V$ is similar to $U^c\tp U$ and  the result follows from
Proposition~\ref{prop:inv-cont}-(i). 
\end{proof}

The following result may be considered as an extension
of \cite[Proposition 3.5]{xiao_property_T}
(see also \cite[Proposition A.1.12]{bekka_property_T} and
\cite[Theorem 2.6]{kyed}).  

\begin{thm}
\label{Theorem: Tensor product contains trivial representation implies
  that each contains a finite-dimensional representation} 
Let $U\in M(C_0(\qg)\ot M_n(\IC))$ and $V\in M(C_0(\qg)\ot \K(H))$ be
unitary representations of a locally compact quantum group $\qg$ with
$U$ being admissible. 
Then the following are equivalent:
\begin{enumerate}[label=\textup{(\roman*)}]
 \item The representations $U$ and $V$ contain a common
   finite-dimensional unitary representation.  
\item The representation $U^c\tp V$ contains the trivial representation.
\end{enumerate}
\end{thm}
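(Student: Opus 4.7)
The plan is to leverage Lemma~\ref{lemma:trivial} and the classical-style correspondence between invariant vectors in $U^c \tp V$ and intertwining operators between $U$ and $V$, with the polar decomposition producing a common subrepresentation.

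For (i)$\Rightarrow$(ii), let $W$ be a common finite-dimensional unitary subrepresentation of $U$ and $V$. Since $U$ is admissible it is a corepresentation of the compact matrix quantum group $\bb\qg$, and any finite-dimensional subrepresentation of a corepresentation of a compact quantum group is again one (by complete reducibility of CQG corepresentations), so $W$ is admissible. Lemma~\ref{lemma:trivial} applied to $W$ gives $1\subset W^c\tp W$. Because the contragredient $(R\ot\top)$ and the tensor product $\tp$ both respect subrepresentations in an evident way, $W\subset U$ gives $W^c\subset U^c$, and combined with $W\subset V$ this yields $W^c\tp W\subset U^c\tp V$. Hence $1\subset U^c\tp V$.

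For (ii)$\Rightarrow$(i), let $\xi\in\overline{\IC^n}\ot H$ be a nonzero invariant vector for $U^c\tp V$. Using the canonical identification of $\overline{\IC^n}\ot H$ with $B(\IC^n,H)$, write $\xi=\xi_T$ for a nonzero $T\in B(\IC^n,H)$. A direct calculation, invoking Remark~\ref{Remark: CQG} (so that $U^c$ may be replaced up to equivalence by $\overline U=(U_{ij}^*)$) and unpacking the invariance condition $(U^c\tp V)(\eta\ot\xi_T)=\eta\ot\xi_T$ leg by leg, produces the intertwining relation
\[
V(1\ot T) = (1\ot T)U \qquad \text{in } M(C_0(\qg)\ot\K(\IC^n,H)).
\]
From this, by taking adjoints and exploiting the unitarity of $U$ and $V$, one deduces $(1\ot T^*)V = U(1\ot T^*)$ as well, and therefore $1\ot TT^*$ commutes with $V$ while $1\ot T^*T$ commutes with $U$. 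The polar decomposition of $T$ then yields a partial isometry $T_0$ whose range is a nonzero finite-dimensional subspace of $H$ invariant under $V$, and whose initial space is a subspace of $\IC^n$ invariant under $U$, with $T_0$ implementing a unitary equivalence between the two subrepresentations. This is the desired common finite-dimensional unitary subrepresentation.

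The main technical hurdle is the verification of the intertwining relation $V(1\ot T)=(1\ot T)U$ from invariance of $\xi_T$: the $R\ot\top$ twist in $U^c=(R\ot\top)U$ must be tracked carefully so that what naively looks like an intertwiner with $U^c$ is correctly recognised as an intertwiner with $U$ itself. Once this translation is in place, the remainder is standard Hilbert space manipulation.
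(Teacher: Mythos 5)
Your proof of (i)$\implies$(ii) is essentially the paper's: restrict to the common subrepresentation $W$, note it inherits admissibility from $U$, and apply Lemma~\ref{lemma:trivial} together with $W^c\tp W\subset U^c\tp V$. For (ii)$\implies$(i), however, you take a genuinely different route. The paper does not convert the invariant vector of $U^c\tp V$ directly into an intertwiner; it defines $y=(\mu\ot\iota)(U^*(1\ot x)V)$ for an \emph{arbitrary} rank-one $x\in B(H,\IC^n)$, where $\mu$ is the invariant mean on $\eb(\qg)$, obtains $U(1\ot y)=(1\ot y)V$ from the invariance of $\mu$ alone, and uses the invariant vector of $\overline{U}\tp V$ only to rule out that $y=0$ for every such $x$. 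The endgame (compactness, the operators $y^*y$ and $yy^*$, polar decomposition) matches yours. The advantage of the averaging construction is that the intertwining relation with $U$ itself comes out for free from invariance of the mean, with no conjugation or antipode bookkeeping.

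That bookkeeping is exactly where your argument has a genuine gap, and it is not merely a matter of care. Unpacking the invariance of $\xi_T$ under $(U^c)_{12}V_{13}$ leg by leg yields a relation of the form $V(1\ot T)=(1\ot T)U'$, where $U'$ is the representation whose matrix coefficients are the elements $R(U_{ij}^*)$. Since $S(U_{ij})=U_{ji}^*$ and $S=R\circ\tau_{-i/2}$ (both statements already presuppose admissibility, so that the coefficients lie in $D(S)$), one has $R(U_{ij}^*)=\tau_{-i/2}(U_{ji})$; thus $U'$ is, up to transposition, $(\tau_{-i/2}\ot\id)(U)$. For non-Kac $\qg$ this is \emph{not} equal to $U$: it is only similar to $U$, via a positive invertible matrix (the modular matrix of $U$ viewed as a corepresentation of $AP(\qg)$). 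So the identity $V(1\ot T)=(1\ot T)U$ that you assert does not hold in general; you must first compose $T$ with that similarity to obtain $T'$ satisfying $V(1\ot T')=(1\ot T')U$, after which the adjoint/polar-decomposition argument goes through ($T'$ has finite rank, so compactness is automatic). The correction is available precisely because $U$ is admissible, so your approach can be completed, but as written the crucial identity is asserted in a form that is false and the step that repairs it is missing.
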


\begin{proof}
(i)$\implies$(ii): 
Let $W$ be a common finite-dimensional unitary representation of $U$
and~$V$. Then $W$ is also admissible as $U$ is admissible. We have
$W^c\tp W\subset U^c\tp V$ and by Lemma~\ref{lemma:trivial},
$1\subset W^c\tp W$.

(ii)$\implies$(i):
Let $\mu$ denote the invariant mean on $\eb(\qg)$.
Let $x\in B(H,\IC^n)$  and 
$y=(\mu\ot\iota)(U^*(1\ot x)V)$ so that $y\in B(H,\IC^n)$. 
Now
\begin{align*}
  U^*(1\ot y)V
  &= (\mu\ot\id\ot\id)\bigl(U^*_{23}U^*_{13}(1\ot1\ot x)V_{13}V_{23}\bigr)\\
 &= (\mu\ot\id\ot\id)\circ(\cop\ot \id)\bigl(U^*(1\ot x)V\bigr)
=  1\ot y
\end{align*}
due to the invariance of $\mu$ (see the proof
of Lemma~\ref{Corollary: expressing the projection onto the set of
  invarint subspace in terms of the invariant mean}
for making the above calculation more rigorous).
Since $U$ is unitary,  we have
\[
U(1\ot y)=(1\ot y)V. 
\]
Next we show that for some $x$, the resulting $y$ is nonzero, 

It follows from the hypothesis, via Proposition~\ref{prop:inv-cont}-(i), 
that $\conj{U}\tp V$ has an invariant vector $\zeta\in \IC^n\ot H$,
and so
\begin{equation} \label{eq:zeta-eq}
(\id\ot \omega_{\zeta,\zeta})(\overline{U}\tp V) = \la \zeta, \zeta\ra 1.
\end{equation}
For $\xi\in H$ and $\alpha\in \IC^n$,
let $x = \theta_{\alpha,\xi}\in B(H, \IC^n)$ be defined by 
$\theta_{\alpha,\xi}(\eta)=\la \eta,\xi\ra\alpha$ for $\eta\in H$.
If $y=(\mu\ot\iota)(U^*(1\ot\theta_{\alpha,\xi})V) = 0$
for every $\xi\in H$ and $\alpha\in \IC^n$,
then for every $\alpha,\beta\in\IC^n$ and $\xi,\eta\in H$, we have  
\begin{equation*}
\begin{split}
0=\la y\eta,\beta\ra
&= \mu\bigl((\iota\ot\omega_{\eta,\beta})(U^\ast(1\ot\theta_{\alpha,\xi})V)\bigr)
=\mu\bigl(
  (\iota\ot\omega_{\beta,\alpha})(U)^\ast(\iota\ot\omega_{\eta,\xi})(V)\bigr)\\
&=\mu\bigl( (\iota\ot\omega_{\alpha,\beta})(\overline{U})
           (\iota\ot\omega_{\eta,\xi})(V)\bigr).
\end{split}
\end{equation*}
Therefore $(\mu\ot\iota)(\overline{U}\tp V) = 0$, and so
by equation \eqref{eq:zeta-eq}
\[
0 = \omega_{\zeta,\zeta}\bigl((\mu\ot\iota)(\overline{U}\tp V)\bigr)
= \mu\bigl((\id\ot \omega_{\zeta,\zeta})(\overline{U}\tp V)\bigr)
= \la\zeta, \zeta\ra \mu(1) \ne 0.
\]
Consequently, $y\ne 0$ for some $\xi\in H$, $\alpha\in \IC^n$.

To finish the proof we now argue as in the last part of the proof of
\cite[Theorem 2.6]{kyed}. Since $U$ and $V$ are unitaries such that
$U(1\ot y)=(1\ot y)V$, it follows that
$V(1\ot y^\ast)=(1\ot y^\ast)U$. Moreover, since 
$y$ is a compact operator, this implies that also $y^\ast y$ is
compact and satisfies $V(1\ot y^\ast y)=(1\ot y^\ast y)V$. Similarly we have
that $U(1\ot yy^\ast)=(1\ot yy^\ast)U$, where $yy^\ast\in
B(\IC^n)$. Both $y^\ast y$ and $yy^\ast$ are positive compact
operators, which implies that the eigenspaces of positive eigenvalues
are finite-dimensional. Moreover, the non-zero part of the spectrum of
$y^\ast y$ and $yy^\ast$ are the same. So let $\lambda>0$ be a common
eigenvalue for both $y^\ast y$ and $yy^\ast$. Then the eigenprojection
of $yy^\ast$ corresponding to $\lambda$ also intertwines $U$, thereby
producing a finite-dimensional subrepresentation of $U$, say
$W$. Applying the same argument to $y^\ast y$, we get a
finite-dimensional subrepresentation of $V$, say $W^\prime$. Since $U$ is
admissible by hypothesis, also  $W$ is admissible.
Moreover, the partial isometry coming from the polar
decomposition of $y$ gives an equivalence between $W$ and $W^\prime$,
which proves the claim.
\end{proof}

We now prove a generalisation of conditions (T5) and (T6) in
Theorem~\ref{thm:T-equi}. 

\begin{thm} \label{thm:kazhdan}
 Let $\qg$ be any locally compact quantum group. Then 
 $\qg$ having Property (T) is equivalent to any of the following
 statements:
\begin{enumerate}[label=\textup{(T\arabic*)}] \setcounter{enumi}{4}
 \item 
Every finite-dimensional irreducible C*-representation of
$C_0^u(\qgdual)$ is an isolated point in $\widehat{C^u_0(\qgdual)}$.  
\item[\textup{(T6$'$)}] 
There is a finite-dimensional irreducible C*-representation of
$C_0^u(\qgdual)$ which is covariant with respect to the scaling
automorphism group $(\widehat{\tau}^u_t)$
and is an isolated point in $\widehat{C^u_0(\qgdual)}$.  
\item
$C^u_0(\qgdual)\cong B\oplus M_n(\IC)$ for some C*-algebra $B$ and
  some $n\in\IN$ with $\widehat{\tau}^u_t(B)\subset B$ for all $t\in\IR$. 
\end{enumerate}
\end{thm}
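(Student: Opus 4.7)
The plan is to complete the cycle $(T1) \Rightarrow (T5) \Rightarrow (T6') \Leftrightarrow (T7) \Rightarrow (T1)$; the cycle shortcuts because $(T5)$ immediately forces $(T2)$ — the counit $\widehat\epsilon_u$ is one-dimensional (hence finite-dimensional) irreducible, so $(T5)$ isolates it — and Theorem \ref{thm:T-equi} then delivers $(T1)$. Similarly $(T5) \Rightarrow (T6')$ is witnessed by $\pi = \widehat\epsilon_u$ (which is automatically covariant, since $\widehat\epsilon_u \circ \widehat\tau^u_t = \widehat\epsilon_u$), and $(T7) \Rightarrow (T6')$ is witnessed by the coordinate projection onto $M_n(\IC)$. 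So the genuine work lies in $(T1) \Rightarrow (T5)$, $(T6') \Rightarrow (T7)$, and the closing $(T7) \Rightarrow (T1)$.

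For $(T1) \Rightarrow (T5)$: fix a finite-dimensional irreducible $*$-representation $\pi: C^u_0(\qgdual) \to M_n(\IC)$ and set $U = (\iota \ot \pi)(\wW)$. Property (T) of $\qg$ forces unimodularity (by the short new proof given earlier in the paper, extending Brannan--Kerr), so Theorem \ref{thm:admissibility-unimodular} makes $U$ admissible. To apply Proposition \ref{prop:T}(ii) and conclude that $\pi$ is isolated, take any representation $\sigma$ of $C^u_0(\qgdual)$ with $\pi \prec \sigma$, and let $V$ be the associated unitary representation of $\qg$, so that $U \prec V$. Tensoring on the left by $U^c$ (Remark \ref{Remark: Tensor Containment}) and using $1 \subset U^c \tp U$ (Lemma \ref{lemma:trivial}, whose admissibility hypothesis is met) gives $1 \prec U^c \tp V$; Property (T) upgrades this to $1 \subset U^c \tp V$, and Theorem \ref{Theorem: Tensor product contains trivial representation implies that each contains a finite-dimensional representation} then produces a common finite-dimensional subrepresentation of $U$ and $V$. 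Irreducibility of $U$ forces this subrepresentation to be $U$ itself, so $\pi \subset \sigma$, as required.

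For $(T6') \Rightarrow (T7)$: given a witnessing isolated covariant $\pi$, Proposition \ref{prop:T}(iii) yields $C^u_0(\qgdual) = \Ker \pi \oplus J$ with $J \cong \pi(C^u_0(\qgdual)) = M_n(\IC)$, the latter isomorphism because $\pi$ is irreducible into a finite-dimensional simple algebra. Covariance $\pi \circ \widehat\tau^u_t = \alpha_t \circ \pi$ then gives $\pi(\widehat\tau^u_t(\Ker \pi)) = \alpha_t(0) = 0$, so $B := \Ker \pi$ satisfies $\widehat\tau^u_t(B) \subset B$, establishing $(T7)$.

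The main obstacle is $(T7) \Rightarrow (T1)$. From the decomposition one gets an admissible, covariant, isolated irreducible $\pi$ with associated unitary $U$, and for any $V$ with $1 \prec V$ the isolation of $\pi$ combined with Remark \ref{Remark: Tensor Containment} yields $U \subset U \tp V$; then Lemma \ref{lemma:trivial} together with Theorem \ref{Theorem: Tensor product contains trivial representation implies that each contains a finite-dimensional representation} produces a finite-dimensional admissible subrepresentation of $V$. The delicate point — which the proof must resolve — is that this shared subrepresentation need not a priori be the trivial one. The resolution I envisage decomposes $V = V_{\mathrm{adm}} \oplus V_\infty$, where $V_{\mathrm{adm}}$ is the closed linear span of the finite-dimensional admissible subrepresentations of $V$; since $V_{\mathrm{adm}}$ is then a unitary representation of the compact quantum group $\bb\qg$, the Haar state of $\bb\qg$ yields an invariant-projection formula \`a la Lemma \ref{Corollary: expressing the projection onto the set of invarint subspace in terms of the invariant mean}, forcing almost invariance of $V_{\mathrm{adm}}$ to imply exact invariance of $V_{\mathrm{adm}}$, while $V_\infty$ cannot carry almost invariant vectors without contradicting its defining property of containing no finite-dimensional admissible subrepresentations. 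Hence the almost invariant vectors of $V$ concentrate in $V_{\mathrm{adm}}$, which must then contain the trivial representation, completing Property (T).
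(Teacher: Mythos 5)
Your reductions \textup{(T1)}$\implies$\textup{(T5)}, \textup{(T5)}$\implies$\textup{(T6$'$)} via the counit, and \textup{(T6$'$)}$\implies$\textup{(T6)} via the kernel decomposition coincide with the paper's, and the shortcut \textup{(T5)}$\implies$\textup{(T2)}$\implies$\textup{(T1)} through Theorem \ref{thm:T-equi} is legitimate. The problem is the closing implication \textup{(T6)}$\implies$\textup{(T1)} (your ``(T7)$\implies$(T1)''), where you depart from the paper and the argument has a genuine gap. Your plan is to show directly that any $V$ with $1\prec V$ has an invariant vector by splitting $V=V_{\mathrm{adm}}\oplus V_{\infty}$ and (a) ruling out almost invariant vectors in $V_\infty$, (b) upgrading almost invariance to invariance on $V_{\mathrm{adm}}$ via the Haar state of $\bb\qg$. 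Step (b) does not work as stated. The invariant mean $\mu\in\eb(\qg)^*$ (equivalently the Haar state of $\ap(\qg)$) is only norm-continuous; almost invariance of a net $(\xi_i)$ only gives $(\iota\ot\omega_{\xi_i,\xi_i})(V_{\mathrm{adm}})\to 1$ in the weak operator topology, from which one cannot conclude $\mu\bigl((\iota\ot\omega_{\xi_i,\xi_i})(V_{\mathrm{adm}})\bigr)\to 1$, so the projection formula of Lemma \ref{Corollary: expressing the projection onto the set of invarint subspace in terms of the invariant mean} gives no lower bound on $(\mu\ot\iota)(V_{\mathrm{adm}})$. Equivalently, at the C*-level: $V_{\mathrm{adm}}$ decomposes as $\bigoplus_j W_j$ with $W_j$ irreducible finite-dimensional admissible, and $1\prec\bigoplus_j W_j$ does not force $1\cong W_j$ for some $j$ --- each singleton $\{\pi_{W_j}\}$ is closed in $\widehat{C_0^u(\qgdual)}$, but an infinite union of closed points need not be closed (this is exactly what happens for $\qg=\IR$ with the characters $t\mapsto e^{it/n}$). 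Nothing in your sketch uses the hypothesis \textup{(T6)} at this point to exclude that scenario.

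The paper avoids this entirely by proving \textup{(T6)}$\implies$\textup{(T2)} rather than \textup{(T6)}$\implies$\textup{(T1)} directly: it assumes the counit is \emph{not} isolated, picks a net of irreducibles $\rho_\alpha$ converging to $\widehat\epsilon_u$ in the Fell topology that avoids the \emph{finite} set of irreducible components $\pi_0=\widehat\epsilon_u,\pi_1,\dots,\pi_m$ of $\pi_{U^c\tp U}$ (possible because finitely many closed points can be removed from a neighbourhood), and then runs essentially your first-paragraph argument to conclude that some $\rho_\alpha$ must coincide with some $\pi_k$ --- a contradiction. The crucial extra device is this ``avoid the finitely many components of $U^c\tp U$'' step, which converts ``$V$ has \emph{some} finite-dimensional admissible subrepresentation'' into ``that subrepresentation is one you have already excluded''. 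Your argument only ever concludes that $V_{\mathrm{adm}}$ contains some $W_j$ equivalent to some $\pi_k$, with no mechanism to force $k=0$. To repair the proof you should either import the paper's Fell-topology contradiction, or give an honest proof that $1\prec V_{\mathrm{adm}}$ implies $1\subset V_{\mathrm{adm}}$ under hypothesis \textup{(T6)}; the latter is not a formal consequence of compactness of $\bb\qg$.
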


\begin{proof}
  The proof is based on the same idea as the proof of
  \cite[Theorem~3.6]{xiao_property_T}.  

  (T5)$\implies$(T6$'$) because covariant irreducible finite-dimensional
  representations always exist: the counit.
  
  (T6$'$)$\implies$(T6):
  Let $\pi$ be a finite-dimensional
  irreducible representation of $C^u_0(\qgdual)$
  which is covariant with respect to the scaling automorphism group.  
  By the equivalence of (i) and (iii) in Proposition~\ref{prop:T},
  $C^u_0(\qgdual)\cong \ker \pi \oplus M_n(\IC)$ for some $n\in\IN$.
  Since $\pi$ is covariant, we have
  $\widehat{\tau}^u_t(\ker(\pi))\subset \ker\pi$ for all $t\in\IR$,
  and so (T6) holds.

  It remains to prove that (T1) of
  Theorem~\ref{thm:T-equi} implies (T5) and that (T6) implies (T2) of
  Theorem~\ref{thm:T-equi}. The result will then follow from the 
  equivalence of (T1) and (T2) of Theorem~\ref{thm:T-equi}. 

(T1)$\implies$(T5):
To prove (T5), it is enough to show that (ii) in Proposition~\ref{prop:T} holds
for every irreducible finite-dimensional representation $\rho$ of
$C^u_0(\qgdual)$. To this end, let $\pi$ be a representation of
$C_0^u(\qgdual)$ such that $\rho\prec\pi$.
Let $U=(\iota\ot\rho)(\wW)$ and $V=(\iota\ot\pi)(\wW)$.
Since $\qg$ has Property (T), it is unimodular by
Theorem~\ref{thm: property T implies unimodularity}, and so
$\qg$ satisfies the Admissibility Conjecture
by Theorem~\ref{thm:admissibility-unimodular}.
Hence $U$ is an irreducible finite-dimensional
admissible unitary representation of $\qg$
and $U\prec V$. Now by Lemma 
\ref{lemma:trivial} we have that $1\subset U^c\tp U$ and also
$U^c\tp U\prec U^c\tp V$ by Remark~\ref{Remark: Tensor Containment}.
Since $\qg$ has Property (T) and $1\prec U^c\tp V$, it follows
that $1 \subset U^c \tp V$. Thus by Theorem \ref{Theorem: Tensor
  product contains trivial representation implies that each contains a
  finite-dimensional representation} there exists a finite-dimensional
unitary representation $W$ such that $W\subset U$ and
$W\subset V$. Since $U$ is irreducible, this implies that $W=U$, and so
$\rho\subset\pi$.

(T6)$\implies$(T2):
Suppose that $C^u_0(\qgdual)\cong B\oplus M_n(\IC)$ and $B$ in
invariant under the scaling automorphism group.
Let $\mu:C^u_0(\qgdual)\to M_n(\IC)$ be the irreducible representation
corresponding to the summand $M_n(\IC)$, and let 
$U=(\iota\ot\mu)(\wW)$ be the unitary representation of $\qg$
associated with $\mu$. Since $\ker(\mu)\cong B$ and
$\widehat{\tau}^u_t(B)\subset B$ for all $t\in\IR$, it follows that
$\mu$ is covariant with respect to the scaling automorphism group of
$C^u_0(\qgdual)$. Hence $U$ is admissible by Proposition
\ref{Proposition: covariant finite-dimensional C* representations give
  admissible corepresentations}. 
The representation $\pi$ of $C^u_0(\qgdual)$
associated to $U^c\tp U$ is finite-dimensional and covariant with
respect to scaling automorphism of $C^u_0(\qgdual)$ by Proposition
\ref{Proposition: covariant finite-dimensional C* representations give
  admissible corepresentations} since $U^c\tp U$ is admissible.
Therefore, $\pi$ decomposes into a direct sum of finitely many
covariant irreducible representations. Since
$1\subset U^c\tp U$ by Lemma~\ref{lemma:trivial},
one of the irreducible components is $\widehat{\epsilon}_u$.
Therefore $\pi=\bigoplus_{k=0}^m\pi_k$, where $\pi_0 =
\widehat{\epsilon}_u$ and $\pi_k$ is
an irreducible finite-dimensional covariant representation
for each $k=1,2,\ldots, m$. By Proposition~\ref{prop:T},
each singleton set $\{\pi_k\}$ is closed in
the Fell topology. Towards a contradiction, let us assume that
$\widehat{\epsilon}_u$ is not an isolated point.
So there is a net
\[
(\rho_\alpha)_{\alpha\in\Lambda} \subset
\widehat{C_0^u(\qgdual)}\setminus
\{\widehat{\epsilon}_u,\pi_1,\pi_2,\ldots,\pi_m\} 
\]
such that $(\rho_\alpha)_{\alpha\in\Lambda}$ converges in the Fell topology to
$\widehat{\epsilon}_u$. By the definition of closure in the
Fell topology, this implies that
$\widehat{\epsilon}_u \prec \bigoplus_{\alpha\in\Lambda}\rho_\alpha$,
and so 
\[
\mu=(\mu\ot\widehat{\epsilon}_u)\circ\chi\circ\widehat{\cop}_u
\prec\bigoplus_{\alpha\in\Lambda}(\mu\ot\rho_\alpha)\circ\chi\circ\widehat{\cop}_u,
\]
where $\chi$ is the flip map.
Since $\mu$ satisfies condition (iii) in Proposition~\ref{prop:T}
(by definition), it follows by condition (ii) in
Proposition~\ref{prop:T} that  
\[ 
\mu\subset\bigoplus_{\alpha\in\Lambda}
(\mu\ot\rho_\alpha)\circ\chi\circ\widehat{\cop}_u. 
\]
By irreducibility of $\mu$ we have
$\mu\subset(\mu\ot\rho_{\alpha})\circ\chi\circ\widehat{\cop}_u$ for some
$\alpha\in\Lambda$. Letting $U_{\alpha}=(\iota\ot\rho_{\alpha})(\wW)$, this
means that $U\subset U\tp U_{\alpha}$. Combining this with Lemma
\ref{lemma:trivial} it follows that  
\[
1\subset U^c\tp U\subset U^c\tp U\tp U_{\alpha},  
\]
so that we have 
\[
1\subset U^c\tp U\tp U_{\alpha}
=\Big(\bigoplus_{k=0}^m (\iota\ot\pi_k)(\wW)\Big)\tp U_{\alpha}.  
\]
Since $(U^c\tp U)^c$ is equivalent to $U^c\tp U$ (recall that $U$ is
admissible, Remark~\ref{Remark: CQG}), it follows that   
\[
1\subset\Big(\bigoplus_{k=0}^m(\iota\ot\pi_k)(\wW)\Big)^c
\tp U_{\alpha}. 
\]
An application of Theorem \ref{Theorem: Tensor product contains
  trivial representation implies that each contains a
  finite-dimensional representation} now yields a finite-dimensional
unitary representation $W$ such that
$W\subset\bigoplus_{k=0}^m(\iota\ot\pi_k)(\wW)$ and
$W\subset U_{\alpha}$.
Since $U_{\alpha}$ is irreducible, we have $W=U_{\alpha}$.
On the other hand, $\pi_k$ is irreducible 
for $k=0,1,2,\ldots, m$, and so
$U_{\alpha}=(\iota\ot\pi_{k_0})(\wW)$ for some
$k_0\in\{0,1,2,\ldots, m\}$. This means that $\rho_{\alpha}=\pi_{k_0}$
which is a contradiction. Thus $\widehat{\epsilon}_u$ must be
an isolated point. 
\end{proof}

\section{Properties of quantum groups with Property (T)}

In this section we prove several properties shared by quantum groups
with Property (T). We include a very short proof of the fact that a
quantum group with Property (T) is unimodular (see \cite[Section
  6]{brannan_property_T} and \cite[Proposition
  7]{Fima_property_T_discrete_quantum_group}). We consider unimodular
locally compact quantum groups as well as quantum groups arising
through the bicrossed product construction as discussed in Subsection
\ref{Subsection: Examples of locally compact  quantum groups with
  admissible representations} and prove a variation of Theorem
\ref{thm:kazhdan} and improved versions of the quantum versions of
the Bekka--Valette theorem \cite[Theorem 4.8]{brannan_property_T}
(characterising Property (T) in terms of non-existence of almost
invariant vectors for weakly mixing representations) and the Kerr--Pichot
theorem \cite[Theorem 4.9]{brannan_property_T} (characterising
Property (T) in terms of density properties of weakly mixing
representations) for these quantum groups. 

\subsection{Quantum groups with Property (T) are unimodular}
It is a well-known fact that a locally compact group $G$ with Property
(T) is unimodular \cite[Corollary 1.3.6-(ii)]{bekka_property_T}. The
proof of this result makes use of the fact that if $G$ has Property
(T) and admits a continuous homomorphism into a locally compact
group $H$ with dense range, then $H$ has Property (T) \cite[Theorem
  1.3.4]{bekka_property_T}. A version of \cite[Theorem
  1.3.4]{bekka_property_T} for locally compact quantum groups has been
obtained in \cite[Theorem 5.7]{daws_skalski_viselter}. Using this, it
seems plausible that one can prove that Property (T) for quantum
groups implies unimodularity similarly to the classical case.
However, the proofs in the case of quantum groups have
proceeded differently.

To the best of our knowledge,
the first result in this direction for quantum groups was
proven for discrete quantum groups
\cite[Proposition~7]{Fima_property_T_discrete_quantum_group}. This was
subsequently 
generalised to second countable locally compact quantum groups
\cite[Section~6]{brannan_property_T}. The proof in the
case of a locally compact
quantum group, as given in \cite{brannan_property_T}, proceeds via
showing that non-unimodular locally compact quantum groups always
admit a weakly mixing representation that weakly contains the
trivial representation, as a consequence the quantum group cannot have Property (T). 

We give a very short proof of the fact that Property (T) implies
unimodularity, using a completely different technique, which does not
require the second countability assumption.  

\begin{thm}\label{thm: property T implies unimodularity}
Let $\qg$ be a locally compact quantum group with Property (T). Then
$\qg$ is unimodular.  
\end{thm}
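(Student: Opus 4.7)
The plan is to work with the \emph{modular element} $\delta$ of $\qg$, i.e.\ the strictly positive operator affiliated with $L^\infty(\qg)$ such that the right Haar weight is given by $\psi=\phi_\delta$. Since $\delta$ is group-like, $\cop(\delta)=\delta\otimes\delta$, functional calculus gives $\cop(\delta^{it})=\delta^{it}\otimes\delta^{it}$ for every $t\in\IR$, so each $\delta^{it}\in M(C_0(\qg))$ is a one-dimensional unitary representation of $\qg$. Unimodularity is equivalent to $\delta=1$, which in turn is equivalent to $\delta^{it}=1$ for all $t\in\IR$; so it suffices to rule out $\delta\neq 1$.

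Suppose for contradiction that $\delta\neq 1$. The closed subgroup $S:=\{t\in\IR:\delta^{it}=1\}$ of $\IR$ is then proper (equal to $\{0\}$ or to $\alpha\IZ$ for some $\alpha>0$), so we may pick a sequence $t_n\to 0$ with $t_n\notin S$, i.e.\ $\delta^{it_n}\neq 1$ for every $n$. Form the direct sum representation
\[
V=\bigoplus_{n=1}^{\infty}\delta^{it_n}\in L^\infty(\qg)\otol B(\ell^2(\IN));
\]
it satisfies the corepresentation identity $(\cop\ot\id)(V)=V_{13}V_{23}$ summand-wise, and so by the observation after \eqref{eq:corep} we have $V\in M(C_0(\qg)\ot\K(\ell^2(\IN)))$.

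I then verify two properties of $V$ that together contradict Property (T). Firstly, with $(e_n)$ the standard basis of $\ell^2(\IN)$, strong continuity of the one-parameter unitary group $(\delta^{it})_{t\in\IR}$ on $L^2(\qg)$ gives
\[
\|V(\eta\ot e_n)-\eta\ot e_n\|=\|(\delta^{it_n}-1)\eta\|\longrightarrow 0
\]
for every $\eta\in L^2(\qg)$, so $V$ has almost invariant vectors. Secondly, if $\xi=(\xi_n)_{n}\in\ell^2(\IN)$ is invariant for $V$, then the summand-wise action gives $\xi_n(\delta^{it_n}-1)\eta=0$ for all $\eta\in L^2(\qg)$ and all $n$; since $\delta^{it_n}\neq 1$, each $\xi_n$ must vanish, so $\xi=0$. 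This contradicts Property (T) and finishes the proof.

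The only thing to check carefully is the identity $\cop(\delta^{it})=\delta^{it}\ot\delta^{it}$ --- and this is immediate from $\cop(\delta)=\delta\ot\delta$ together with Borel functional calculus applied to the positive commuting factors on each side --- and the fact that $V$ belongs to the multiplier algebra $M(C_0(\qg)\ot\K(\ell^2(\IN)))$, which is automatic by the excerpt's observation that any unitary in $L^\infty(\qg)\otol B(H)$ satisfying \eqref{eq:corep} already lies there. No admissibility input is required, and second countability plays no role, which is why the argument strengthens the earlier proof of \cite[Theorem~6.3]{brannan_property_T}.
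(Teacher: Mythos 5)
Your proof is correct. It shares the paper's central idea --- the modular element $\delta$ is group-like, so $(\delta^{it})_{t\in\IR}$ is a one-parameter family of one-dimensional unitary representations converging to the trivial one as $t\to 0$, and Property (T) must force $\delta^{it}=1$ near $0$, hence $\delta=1$ --- but you implement the final step differently. The paper passes to the associated characters $\pi_s$ of $C_0^u(\qgdual)$, shows $\pi_s\to\widehat{\epsilon}_u$ pointwise (hence in the Fell topology), and invokes the equivalence of Property (T) with the isolation of the counit in $\widehat{C_0^u(\qgdual)}$ to conclude $\pi_s=\widehat{\epsilon}_u$ for small $s$. You instead work directly from the definition: assuming $\delta\neq 1$, you choose $t_n\to 0$ with $\delta^{it_n}\neq 1$ and check that $V=\bigoplus_n\delta^{it_n}$ has almost invariant vectors (by Stone's theorem, since $\delta$ is a strictly positive self-adjoint operator on $L^2(\qg)$) but no nonzero invariant vector (summand-wise, since each $\delta^{it_n}\neq 1$). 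The two arguments are two sides of the same coin --- weak containment of the counit in $\bigoplus_n\pi_{t_n}$ is exactly the almost-invariance of your $V$ --- but your version is slightly more self-contained, as it needs only the definition of Property (T) and not the (T1)$\iff$(T2) equivalence or the Fell topology. Both versions avoid admissibility and second countability, which is the point of the theorem. The only cosmetic caveat is that $\cop(\delta^{it})=\delta^{it}\ot\delta^{it}$ should be quoted directly (it is \cite[Proposition 7.12]{kusvaes}) rather than "derived" by functional calculus from $\cop(\delta)=\delta\ot\delta$, since the latter identity for an unbounded affiliated element is itself only made precise through the former.
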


\begin{proof}
Let $\delta$ denote the modular element of $\qg$, so that
$\delta$ is a strictly positive element affiliated to $C_0(\qg)$
\cite[Definition 7.11]{kusvaes}.
By \cite[Proposition 7.12]{kusvaes}, for all $s,t\in\IR$, 
\begin{enumerate}[label=\textup{(\roman*)}]
\item
$\Delta(\delta^{is})=\delta^{is}\ot\delta^{is}$,
\item
$\tau_t(\delta^{is})=\delta^{is}$.
\end{enumerate}
Note that $\delta^0=1$ is the trivial representation of $\qg$.
Relation (i) implies that for all $s\in\IR$, $\delta^{is}$ is a
$1$-dimensional unitary representation of $\qg$,
so there exists C*-representations
$\pi_s:C^u_0(\qgdual)\to\IC$ such that
$(\iota\ot\pi_s)(\wW)=\delta^{is}$.
For all $\omega\in B(L^2(\qg))_\ast$, it follows that
\[
\lim_{s\to0} \pi_s( (\omega\ot\iota)(\wW) )
=\lim_{s\to0}\omega(\delta^{is})
=\omega(1)=\widehat{\epsilon}_u((\omega\ot\iota)(\wW)),
\] 
where $\widehat{\epsilon}_u$ is the counit of $C_0^u(\widehat\qg)$.
Since  the family $\{\pi_s\}_{s\in\IR}$ is
uniformly bounded and
the elements of the form $(\omega\ot\iota)(\wW)$ for
$\omega\in B(L^2(\qg))_*$ are norm-dense in $C_0^u(\qgdual)$
\cite[Equation (5.2)]{kus}, it follows that
$\lim_{s\to0}\pi_s(x)=\widehat{\epsilon}_u(x)$ for all $x\in
C^u_0(\qgdual)$.

By \cite[Proposition 9.1]{kus}, 
$(\tau_t\ot\iota)(\wW)=(\iota\ot\widehat{\tau}^u_{-t})\wW$ for all
$t\in\IR$, where $(\widehat{\tau}^u_t)_{t\in\IR}$
is the universal scaling group of $\qgdual$.
Since $\qg$ has Property (T) by hypothesis,
$\widehat{\epsilon}_u$ is isolated in $\widehat{C_0^u(\qgdual)}$.
Since $\lim_{t\to0}\pi_t(x) = \widehat{\epsilon}_u(x)$
for all $x\in C_0^u(\qgdual)$, it follows that the net
$(\pi_t)_{t\in\IR}$ converges to $\widehat{\epsilon}_u$ in the Fell
topology of $\widehat{C_0^u(\qgdual)}$.
As $\widehat{\epsilon}_u$ is isolated, we must have 
$\pi_t=\widehat{\epsilon}_u$ for all $t\in\IR$
with $|t|$ sufficiently small, that is,
$\delta^{it}=1$ for all $t\in\IR$ with $|t|$ sufficiently small.
It follows that in fact $\delta^{it} = 1$ for all $t\in\IR$, and so
that $\delta = 1$, as required.
\end{proof}

\subsection{Other aspects of quantum groups with Property (T)}
Combining Proposition~\ref{Proposition: covariant finite-dimensional
  C* representations give admissible corepresentations},
Theorem~\ref{thm:admissibility-unimodular} and the proof of ``\textup{(T6)} $\implies$ \textup{(T2)}" in Theorem~\ref{thm:kazhdan} we have the following.  

\begin{crlre}\label{Corollary: T5 iff T6 for unimodular}
    Let $\qg$ be a locally compact quantum group
  that is either unimodular or arises
through the bicrossed product construction as described in
Theorem~\ref{thm:bicrossed}.
  Then $\qg$ having Property (T) is equivalent to either of the following 
  statements:
\begin{enumerate}
\item[\textup{(T6$'$)}]  
There is a finite-dimensional irreducible C*-representation of
$C_0^u(\qgdual)$ that is an isolated point in $\widehat{C^u_0(\qgdual)}$.  
\item[\textup{(T6)}] 
$C^u_0(\qgdual)\cong B\oplus M_n(\IC)$ for some C*-algebra $B$ and
  some $n\in\IN$.
\end{enumerate} 
\end{crlre}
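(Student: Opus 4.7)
The plan is to reduce this corollary to the equivalences already established in Theorem \ref{thm:kazhdan}, by observing that under either of the stated hypotheses the covariance/invariance conditions appearing in (T6') and (T6) of that theorem are automatic.

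First I would invoke Theorem \ref{thm:admissibility-unimodular} in the unimodular case and Theorem \ref{thm:bicrossed} in the bicrossed product case to conclude that every finite-dimensional unitary representation of $\qg$ is admissible. Then Proposition \ref{Proposition: covariant finite-dimensional C* representations give admissible corepresentations} tells us that every finite-dimensional non-degenerate $*$-representation $\pi : C^u_0(\qgdual) \to M_n(\IC)$ is automatically covariant with respect to the universal scaling group $(\widehat{\tau}^u_t)_{t\in\IR}$, since the associated unitary representation $(\iota\ot\pi)(\wW)$ is admissible.

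With this in hand both equivalences are immediate. For the version of (T6') stated in the corollary, the condition is a priori weaker than (T6') of Theorem \ref{thm:kazhdan} (it drops the covariance requirement), but by the above remark any finite-dimensional irreducible representation of $C^u_0(\qgdual)$ is automatically covariant, so the two conditions coincide. For (T6), suppose that $C^u_0(\qgdual) \cong B \oplus M_n(\IC)$; the canonical projection onto the $M_n(\IC)$ summand is an irreducible finite-dimensional $*$-representation whose kernel is (isomorphic to) $B$, and the covariance of this projection implies $\widehat{\tau}^u_t(B) \subset B$ for all $t \in \IR$, recovering the stronger (T6) of Theorem \ref{thm:kazhdan}. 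The reverse direction is trivial in each case.

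Combining these observations with the equivalences Property (T) $\Leftrightarrow$ (T6') $\Leftrightarrow$ (T6) of Theorem \ref{thm:kazhdan} finishes the argument. There is no substantial obstacle here: the corollary is a clean repackaging of deeper results that have already been proved, namely the Admissibility Conjecture for these classes of quantum groups (Theorems \ref{thm:admissibility-unimodular} and \ref{thm:bicrossed}), the scaling-group characterisation of admissibility (Proposition \ref{Proposition: covariant finite-dimensional C* representations give admissible corepresentations}), and the equivalences of Theorem \ref{thm:kazhdan}.
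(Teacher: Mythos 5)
Your proposal is correct and follows essentially the same route as the paper, which proves the corollary in one line by combining Theorem \ref{thm:admissibility-unimodular} (respectively Theorem \ref{thm:bicrossed}), Proposition \ref{Proposition: covariant finite-dimensional C* representations give admissible corepresentations}, and the argument for (T6) $\implies$ (T2) in Theorem \ref{thm:kazhdan}. Your observation that admissibility makes every finite-dimensional representation of $C^u_0(\qgdual)$ automatically covariant, so that the covariance/invariance clauses in (T6$'$) and (T6) of Theorem \ref{thm:kazhdan} come for free, is exactly the intended reduction.
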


A unitary representation of a locally compact quantum group
is \emph{weakly mixing} if it admits no nonzero admissible
finite-dimensional subrepresentation
(see \cite{viselter}).
Corollary~\ref{Corollary: T5 iff T6 for unimodular}
together with \cite[Lemma~3.6]{brannan_property_T} gives the
Bekka--Valette theorem for all unimodular locally compact quantum groups.
This was known before only for quantum groups with
trivial scaling automorphism group
\cite[Theorem~4.8]{brannan_property_T}.

\begin{crlre}\label{Corollary: Bekka-Valette theorem for unimodular
    LCQG with nontrivial scaling action}
    Let $\qg$ be a second countable locally compact quantum group
  that is either unimodular or arises
through the bicrossed product construction as described in
Theorem~\ref{thm:bicrossed}.
  Then $\qg$ has Property (T) if and only if
  every weakly mixing unitary representation of $\qg$ fails to have
  almost invariant vectors. 
\end{crlre}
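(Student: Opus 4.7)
I would prove the two directions separately. The forward direction is routine: if $\qg$ has Property (T) and $U$ is any unitary representation with almost invariant vectors, Proposition~\ref{prop:inv-cont} gives $1\prec U$, and Property (T) upgrades this to $1\subset U$. The trivial representation is one-dimensional and trivially admissible, so $U$ contains an admissible finite-dimensional subrepresentation, hence $U$ is not weakly mixing.

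For the converse I would argue contrapositively, assuming $\qg$ fails Property (T) and constructing a weakly mixing representation with almost invariant vectors. The first step is to invoke Corollary~\ref{Corollary: T5 iff T6 for unimodular}: because $\qg$ is either unimodular or a bicrossed product of the type in Theorem~\ref{thm:bicrossed}, failure of Property (T) is equivalent to the failure of (T6$'$). In particular the counit $\widehat\epsilon_u$ is not isolated in $\widehat{C_0^u(\qgdual)}$, so there is a net of inequivalent non-trivial irreducible representations of $C_0^u(\qgdual)$ converging to $\widehat\epsilon_u$ in the Fell topology. The associated unitary representations of $\qg$ carry almost invariant vectors but do not contain the trivial representation as a subrepresentation.

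The second, more delicate step is to refine such a representation so that it becomes weakly mixing while retaining almost invariant vectors. Here I would apply \cite[Lemma~3.6]{brannan_property_T}, which uses second countability to carry out a direct integral decomposition and extract, from a representation with almost invariant vectors and no admissible finite-dimensional subrepresentation, a weakly mixing representation still enjoying almost invariant vectors. This is precisely the strategy of \cite[Theorem~4.8]{brannan_property_T} for quantum groups with trivial scaling group; in that proof the triviality of the scaling group is used only to invoke the (T5)/(T6) part of Theorem~\ref{thm:T-equi}, and in our broader setting this input is supplied by Corollary~\ref{Corollary: T5 iff T6 for unimodular}. Consequently the Brannan--Kerr argument transplants to our setting verbatim, yielding the required weakly mixing representation with almost invariant vectors and hence the desired contradiction.

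The main conceptual obstacle has in fact already been absorbed into Corollary~\ref{Corollary: T5 iff T6 for unimodular}, namely the extension of the isolated-point characterisation of Property (T) to unimodular or bicrossed product quantum groups with possibly non-trivial scaling group. This required the full strength of the admissibility results, Theorems~\ref{thm:admissibility-unimodular} and~\ref{thm:bicrossed}, without which Lemma~\ref{lemma:trivial} and Theorem~\ref{Theorem: Tensor product contains trivial representation implies that each contains a finite-dimensional representation} could not be applied unrestrictedly. Beyond that, no new ideas are needed for the present Bekka--Valette-type characterisation.
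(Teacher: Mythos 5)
Your proposal is correct and follows essentially the same route as the paper, which derives the statement by combining Corollary~\ref{Corollary: T5 iff T6 for unimodular} with \cite[Lemma~3.6]{brannan_property_T}; you have merely spelled out the routine forward direction and the contrapositive reduction that the paper leaves implicit. Your observation that the only scaling-group-dependent input in the Brannan--Kerr argument is the isolated-point characterisation, now supplied by Corollary~\ref{Corollary: T5 iff T6 for unimodular}, is exactly the point of the paper's one-line proof.
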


Combining Corollary \ref{Corollary: Bekka-Valette theorem for
  unimodular LCQG with nontrivial scaling action} with \cite[Theorems
  3.7 \& 3.8]{brannan_property_T} we have the Kerr--Pichot theorem for
unimodular quantum groups with non-trivial scaling group.
Also this was known before only for the case of quantum groups with
trivial scaling group \cite[Theorem~4.8]{brannan_property_T}.

Given a  second countable locally compact quantum group $\qg$ and
a Hilbert space $H$, denote 
the set of all unitary representations of $\qg$ on $H$
by $\Rep(\qg, H)$
and the set of all weakly mixing unitary representations of
$\qg$ on $H$ by $W(\qg, H)$.

\begin{crlre}\label{Corollary: Kerr-Pichot theorem for unimodular LCQG
    with nontrivial scaling action} 
  Let $\qg$ be a  second countable locally compact quantum group
  that is either unimodular or arises
through the bicrossed product construction as described in
Theorem~\ref{thm:bicrossed}. Let $H$ be a separable
infinite-dimensional Hilbert space. 
If $\qg$ does not have Property (T), then
$W(\qg, H)$ is a dense $G_\delta$-set in $\Rep(\qg, H)$.  
If $\qg$ has Property (T), then $W(\qg, H)$ is closed and nowhere
dense in $\Rep(\qg, H)$. 
\end{crlre}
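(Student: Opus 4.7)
The plan is to derive this corollary as a direct consequence of the preceding Bekka--Valette style corollary together with the purely Baire-categorical framework of Brannan and Kerr \cite[Theorems 3.7 \& 3.8]{brannan_property_T}. Those two theorems establish, for any second countable locally compact quantum group acting on a separable Hilbert space, two structural facts about $\Rep(\qg,H)$ that do not rely on the scaling group being trivial: first, $W(\qg,H)$ is always a $G_\delta$ subset of $\Rep(\qg,H)$ in the natural point-strong topology; second, the density (respectively, nowhere-density) of $W(\qg,H)$ is governed entirely by whether or not there exists a weakly mixing representation admitting almost invariant vectors. Once this conditional dichotomy is in hand, Corollary \ref{Corollary: Bekka-Valette theorem for unimodular LCQG with nontrivial scaling action} is exactly the input needed to pin down which side of the dichotomy we land on.

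Concretely, if $\qg$ does not have Property (T), then by the Bekka--Valette corollary there exists a weakly mixing unitary representation with almost invariant vectors; feeding this into the Brannan--Kerr density construction (\cite[Theorem 3.8]{brannan_property_T}) produces a dense set of weakly mixing representations in $\Rep(\qg,H)$, and combining this with the $G_\delta$ property from \cite[Theorem 3.7]{brannan_property_T} gives the dense $G_\delta$ conclusion. If, on the other hand, $\qg$ has Property (T), then by the same corollary no weakly mixing representation admits almost invariant vectors, and the Brannan--Kerr apparatus then forces $W(\qg,H)$ to be closed and nowhere dense, the nowhere-denseness coming from a perturbation argument that produces nearby representations containing nonzero finite-dimensional admissible subrepresentations.

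The point that requires genuine care, and which I expect to be the main technical obstacle, is verifying that the arguments in \cite{brannan_property_T} never implicitly use the triviality of the scaling group beyond what is already encapsulated by the Bekka--Valette characterisation. Inspecting their proofs, the trivial scaling assumption enters in only two places: first, to guarantee that every finite-dimensional unitary representation is admissible, so that the definition of weak mixing as the absence of admissible finite-dimensional subrepresentations behaves well under direct sums, tensor products and perturbations; and second, to invoke the original Bekka--Valette theorem of \cite[Theorem 4.8]{brannan_property_T}. The first input is now supplied by Theorem \ref{thm:admissibility-unimodular} in the unimodular case and by Theorem \ref{thm:bicrossed} in the bicrossed product case, while the second input is supplied by Corollary \ref{Corollary: Bekka-Valette theorem for unimodular LCQG with nontrivial scaling action}. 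With these substitutions in place, the Brannan--Kerr arguments transfer verbatim and yield the claimed conclusion.
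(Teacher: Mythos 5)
Your proposal is correct and follows exactly the route the paper takes: the corollary is obtained by combining Corollary~\ref{Corollary: Bekka-Valette theorem for unimodular LCQG with nontrivial scaling action} with Theorems~3.7 and~3.8 of Brannan--Kerr, the admissibility results (Theorem~\ref{thm:admissibility-unimodular} and Theorem~\ref{thm:bicrossed}) supplying the only input where triviality of the scaling group was previously used. Your extra check that the Brannan--Kerr Baire-category machinery does not otherwise depend on the scaling group is precisely the due diligence the paper leaves implicit.
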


\end{document}